\newcommand{\color}[6]{}
\newcommand{\R}{\mathbb{R}}
\newcommand{\C}{\mathbb{C}}
\newcommand{\D}{\mathbb{D}}
\renewcommand{\P}{\mathbb{P}}
\newcommand{\N}{\mathbb{N}}
\newcommand{\Q}{\mathbb{Q}}
\newcommand{\cb}{\mathbb{\mathcal B}}
\newcommand{\ca}{\mathcal A}
\newcommand{\ce}{\mathcal E}
\newcommand{\cp}{\mathcal{P}}
\newcommand{\cs}{\mathcal{S}}
\newcommand{\ct}{\mathcal{T}}
\newcommand{\nbd}{neighbourhood }
\newcommand{\Om}{\Omega}
\newcommand{\priv}{\backslash}
\newcommand{\lra}{\longrightarrow}
\newcommand{\hra}{\hookrightarrow}
\newcommand{\om}{\omega}
\newcommand{\eps}{\varepsilon}
\renewcommand{\phi}{\varphi}
\newcommand{\vol}{\text{Vol}\,}
\newcommand{\conv}{\text{Conv}\,}
\newcommand{\cqfd}{\hfill $\square$ \vspace{0.1cm}\\ }
\newcommand{\sbull}{{\tiny $\bullet$ }}
\newcommand{\ds}{\displaystyle}
\newcommand{\pd}{\textnormal{\small PD}}
\newcommand{\omhra}{\overset{\om}{\hra}}
\newcommand{\bw}{{\bf w}}
\newtheorem{definition}{Definition}[section]
\newtheorem{thm}{Theorem}
\newtheorem*{thm*}{Theorem}
\newtheorem{prop}[definition]{Proposition}
\newtheorem{lemma}[definition]{Lemma}
\newtheorem{cor}[definition]{Corollary}
\newtheorem{rk}[definition]{Remark}
\newtheorem{conj}{Conjecture}
\title{ \vspace*{0cm} Packing stability for symplectic $4$-manifolds.}
\author{O. Buse\footnote{Partially supported by NSF grant  DMS-1211244.}, R. Hind and E. Opshtein\footnote{Partially supported by ANR project "hameo" ANR-116JS01-010-01.}.}
\begin{document}
\maketitle
\begin{abstract}
The packing stability  in symplectic geometry was first noticed by Biran \cite{biran5}: the symplectic obstructions to embed several balls into a manifold disappear when their size is small enough. This phenomenon is known to hold for all closed manifolds with rational symplectic class (see \cite{biran2} for the $4$-dimensional case, and \cite{buhi,buhi2} for higher dimensions), as well as for all ellipsoids \cite{buhi2}.

In this note, we show that packing stability holds for all closed, and several open, symplectic $4$-manifolds.
\end{abstract}
\section{Introduction}
In \cite{biran5,biran2}, Biran discovered the packing stability phenomenon: in some symplectic $4$-manifolds, the symplectic obstructions to pack $N$ identical balls disappear when $N$ becomes large enough. He later generalized this result to every {\it closed} symplectic $4$-manifold with rational symplectic class ($[\om]\in H^2(M,\Q)$).

In this paper we generalize these results in several directions, by applying the singular inflation technique developed in \cite{moi6,mcop}. We generalize Biran's packing stability to all symplectic $4$-manifolds, and also to a class of open symplectic $4$-manifolds including ellipsoids and domains we call pseudo-balls.
For these manifolds, we establish not just packing stability but strong packing stability, which we define now. The definition is in the spirit of Biran's first results on the subject \cite{biran5}. Throughout $B(\lambda)$ will denote an open ball of capacity $\lambda$ (see Definition \ref{defnpseudo}) and we will write $U \omhra V$ for a symplectic embedding.

\begin{definition} A symplectic manifold $X$ has property $\cp(\lambda)$ if for all collection of real numbers $(\lambda_i)$ with $\lambda_i<\lambda\;\forall i$,  $\sqcup B(\lambda_i)\omhra X$ if and only if the volume obstruction $\frac{1}{2}\sum \lambda_i^2 \le \mathrm{Vol}(X)$ is satisfied. We say that $X$ has the strong packing stability if it satisfies $\cp(\lambda)$ for some $\lambda$. We then define
$$
\Lambda(X):=\inf\{\lambda \;|\; X \text{ verifies } \cp(\lambda)\}.
$$

If there exists a $\mu$ such that for all $\lambda< \mu$ we have $\sqcup B(\lambda)\omhra X$ if and only if the volume obstruction is satisfied, then we say that $X$ has packing stability. (In other words, we only consider embeddings by equally sized balls.)
\end{definition}

\begin{rk} Since the volume inequality used is not strict, this definition even of packing stability is stronger than those used previously (for example in \cite{buhi}) which ask for embeddings only if the union of balls has strictly smaller volume. Of course, to obtain embeddings in the case of equal volume, it is necessary for us to work with open balls. Such embeddings were called very full fillings in \cite{lamcsc}, section $7$.
\end{rk}

Our main result is the following.

\begin{thm}\label{thm:irrstabclosed}
All closed $4$-dimensional symplectic manifolds have strong packing stability.
 \end{thm}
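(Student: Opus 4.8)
The plan is to reduce Theorem~\ref{thm:irrstabclosed} to the construction of large embedded ellipsoids of rational eccentricity, using Biran's theorem for rational symplectic classes as input and the singular inflation of \cite{moi6,mcop} to pass from rational classes to arbitrary ones. The first step is the reduction: I claim it suffices to prove that every closed symplectic $4$-manifold $(M,\omega)$ admits, for each $\epsilon>0$, a symplectic embedding $E(c,d)\omhra M$ with $d/c\in\Q$ and $\mathrm{Vol}\,E(c,d)>\mathrm{Vol}(M)-\epsilon$. Granting this, $\cp(\lambda)$ for $(M,\omega)$ follows from the strong packing stability of rational ellipsoids: given $(\lambda_i)$ with $\lambda_i<\lambda$ and $\tfrac12\sum\lambda_i^2<\mathrm{Vol}(M)$ one chooses $E(c,d)$ of volume strictly between these two numbers, packs $\sqcup B(\lambda_i)$ into $E(c,d)$ (by \cite{buhi2}, or by McDuff's reduction of ellipsoid embeddings to ball packings), and composes with $E(c,d)\omhra M$; here $\lambda$ is the infimum of the thresholds of the ellipsoids used. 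Since $\sum\lambda_i^2<\infty$ one only ever deals with finitely many balls of a given size, and the borderline case $\tfrac12\sum\lambda_i^2=\mathrm{Vol}(M)$ is recovered from the strict one by a limiting argument that exploits the openness of the $B(\lambda_i)$ --- the ``very full filling'' phenomenon of \cite{lamcsc}, which is exactly why the definition is stated for open balls.

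For the ellipsoid construction, the rational case is essentially Biran's: his splitting of a rational $(M,\omega)$ into a standard disc bundle over a symplectic surface Poincar\'e dual to a multiple of $[\omega]$, plus a null set, fills nearly all of $\mathrm{Vol}(M)$ by a ball, hence by an ellipsoid of rational eccentricity. When $[\omega]$ is irrational there is no such surface, and this is where inflation is essential. One picks a $J$-holomorphic, possibly nodal, curve $C$ with $[C]^2\ge 0$ --- available for $b^+=1$ from McDuff's classification of symplectic $4$-manifolds, and for $b^+>1$ from Taubes--Seiberg--Witten theory --- and applies singular inflation to get symplectic forms in the classes $[\omega]+t\,\mathrm{PD}[C]$, $t\ge0$, that agree with $\omega$ outside a neighbourhood of $C$; consequently any ellipsoid embedded in the complement of $C$ survives. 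The strategy is to inflate heavily along $C$ until the class is almost proportional to $\mathrm{PD}[C]$, perform a Biran-type construction relative to $C$ inside the enlarged manifold, and carry the resulting rational ellipsoid back to $(M,\omega)$; a single rational parameter can be tuned so that the perturbation of $[\omega]$ used along the way is itself rational, bringing Biran's theorem to bear.

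I expect the main difficulty to be quantitative rather than structural. Straightforward rational approximation $a_k\to[\omega]$ combined with Biran's theorem for each $(M,\omega_k)$ is of no use, because as the denominators of $a_k$ grow the self-intersection of the relevant divisor grows and Biran's threshold $\Lambda(M,\omega_k)$ tends to $0$; likewise a single fill-and-inflate step will in general capture only a fixed proportion of $\mathrm{Vol}(M)$, not all of it. So the real work is to organise the inflations --- presumably by iterating the fill-and-inflate step and controlling the geometry at each stage --- so that the captured volume can be pushed arbitrarily close to $\mathrm{Vol}(M)$ with the eccentricities kept rational, and it is this uniformity, rather than the existence of any individual embedding, that I would expect to be the crux.
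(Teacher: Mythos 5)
Your reduction step is the problem: you propose to reduce the theorem to the existence, for every $\eps>0$, of a single rationally-eccentric ellipsoid $E(c,d)\omhra (M,\om)$ with $\mathrm{Vol}\,E(c,d)>\mathrm{Vol}(M)-\eps$. For a general closed $4$-manifold with irrational symplectic class this is not a known statement --- it is essentially (a weak form of) Conjecture \ref{conj1}, which the paper explicitly flags as open and deliberately avoids. Your sketch for producing such an ellipsoid does not close this gap: inflating along a curve $C$ produces forms in the classes $[\om]+t\,\mathrm{PD}[C]$, but these forms are \emph{larger} than $\om$ near $C$, so an ellipsoid constructed in the inflated manifold (even one disjoint from $C$) does not automatically embed back into $(M,\om)$; and a Biran-type disc-bundle decomposition relative to $C$ requires $\mathrm{PD}[C]$ to be proportional to $[\om]$, which no finite amount of inflation achieves when $[\om]$ is irrational. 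Note also that McDuff's equivalence between ellipsoid embeddings and ball packings is only available for rational targets, so you cannot reassemble a near-full ball packing of $M$ into a single ellipsoid either. Two secondary issues would remain even granting the reduction: you need the packing thresholds $\Lambda(E(c,d))$ to stay bounded away from $0$ as $\eps\to 0$ (local boundedness of $\Lambda$ is proved for ellipsoids, but you must check your family stays in a region where it applies), and the borderline case $\tfrac12\sum\lambda_i^2=\mathrm{Vol}(M)$ cannot be reached through targets of volume $<\mathrm{Vol}(M)$; the limiting arguments in the literature rely on uniqueness of ball packings up to isotopy, which is not available in an arbitrary closed $4$-manifold.

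The paper's route is structurally different precisely to sidestep this. It never produces one large ellipsoid in $M$. Instead it uses the singular polarizations of \cite{moi6,moi7} and the associated Liouville vector field to decompose $M$, up to volume zero, into \emph{finitely many} standard pieces: the basins of repulsion of discs in the polarization curves are ellipsoids, and the basins of repulsion of crossings of two curves are pseudo-balls $T(a,b,\alpha,\beta)$ (Proposition \ref{prop:attrbasins}); the pseudo-balls are unavoidable because the polarization curves intersect. Strong packing stability is then proved separately for ellipsoids (Theorem \ref{thm:irrstabell}) and pseudo-balls (Theorem \ref{thm:irrstabpseudo}), with locally bounded thresholds, and the theorem follows by partitioning the given balls into groups whose total volumes approximately match the pieces and perturbing the decomposition (the areas $\ca_i,\ca_{i,j}$ of the discs) so that the volumes match exactly; the very-full-filling issue is then confined to the model pieces, where uniqueness of packings is available. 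If you want to salvage your plan, you would either have to prove Conjecture \ref{conj1} (or its $\eps$-version), or replace the single ellipsoid by a finite collection of standard domains as the paper does.
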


To prove this theorem we will first need to establish strong packing stability for some open symplectic manifolds, and these results are interesting in themselves. To define these domains we work in $\C^2$ with its standard symplectic form.

\begin{definition}\label{defnpseudo} Let $U \subset \C^2$ be the interior of a domain of the form $\{(z,w)\in \C^2\; |\; (\pi |z|^2,\pi |w|^2)\in P\}$ where $P$ is a subset of the first quadrant in $\R^2$.

If $P$ is the convex hull $P=\conv\langle (0,0), (a,0), (0,b) \rangle$ then we say $U$ is a symplectic ellipsoid $E(a,b)$. A ball of capacity $a$ is an ellipsoid $B(a)=E(a,a)$, and we write $\tau E(a,b)$ for $E(\tau a, \tau b)$.

If $P=\conv\langle (0,0), (0,a), (b,0), (\alpha,\beta) \rangle$ for some $a>\alpha$, $b>\beta$ and $a,b< \alpha+\beta$ then we say that $U$ is a pseudo-ball $T(a,b,\alpha,\beta)$.
\end{definition}

Now we can state the following.

\begin{thm}\label{thm:irrstabell}
The $4$-dimensional ellipsoids have the strong packing stability. Moreover, the function $\Lambda(E(1,a))$ is locally bounded on $\R^*$.
\end{thm}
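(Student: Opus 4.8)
The ordinary packing stability of $4$‑dimensional ellipsoids is already known (\cite{buhi2}), so the content here is to upgrade it to the \emph{strong} form --- arbitrary, not necessarily equal, balls, and the exact‑volume case --- and to control the threshold. The plan is to compactify the ellipsoid and reduce the packing problem to the existence of symplectic forms on a blow‑up of a rational surface, which is then attacked by singular inflation. I would start with $a=p/q$ rational. In that case $\mathrm{int}\,E(1,a)$ carries McDuff's volume‑filling ball decomposition $\bigsqcup_{j}B(w_j)$ with $\sum_j w_j^2=a=2\,\mathrm{Vol}(E(1,a))$, and, equivalently, $E(1,a)$ sits with measure‑zero complement inside a closed rational symplectic $4$‑manifold whose complementary divisor is a configuration of symplectic spheres. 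Packing balls $B(\lambda_i)$ into $E(1,a)$ then amounts to finding a symplectic form in a prescribed ``reduced'' class on the manifold obtained by blowing up along the $\lambda_i$ and the $w_j$: the inequality $\tfrac12\sum_i\lambda_i^2\le\mathrm{Vol}(E(1,a))$ is precisely the positivity of the square of that class, the exceptional classes of the $w_j$ are represented by spheres, and what remains is to kill the obstructions coming from the other classes of negative square.

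To do this when the $\lambda_i$ are small I would combine the existence of symplectic forms in all numerically allowed classes on rational and ruled surfaces (Seiberg--Witten theory, after Taubes and Li--Liu) with the singular inflation of \cite{moi6,mcop}: take the symplectic form produced by packing slightly shrunk balls $B((1-\eps)\lambda_i)$, then inflate along the spheres that compactify the ball decomposition --- which have non‑negative self‑intersection, the controllably negative cases being exactly what the singular version handles --- so as to deform the class to the desired one and recover the lost $\eps$‑volume. This is the step that both yields the exact‑volume (very full filling) case, making it essential to work with open balls, and absorbs the inequality of the $\lambda_i$. I expect the technical heart of the argument to lie here: showing that the deformation can be kept symplectic throughout, which is exactly what the singular inflation estimates are designed to guarantee.

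For irrational $a$ one cannot simply approximate by rationals, since the number of weights $w_j$ --- hence the scale below which the above argument works --- blows up near irrational values. Instead I would invoke McDuff's combinatorial criterion for ball packings of ellipsoids: for fixed $a$ only finitely many exceptional classes can obstruct the packing of balls of capacity $<\lambda$ beyond the volume bound, this finite family is locally constant in $a$, and each associated constraint depends continuously on $a$; hence there is $\lambda(a)>0$, locally bounded away from $0$ on $\R^*$, for which the volume condition alone suffices, i.e. $E(1,a)$ has $\cp(\lambda(a))$. Finally the ``moreover'' follows from the single‑ball relation $B(c)\omhra E(1,a)\iff c\le\min(1,a)$ (immediate from Gromov non‑squeezing in the cylinder together with the inclusion $E(\min(1,a),\min(1,a))\subset E(1,a)$), which forces $\Lambda(E(1,a))\le\min(1,a)$ and thus bounds $\Lambda(E(1,\cdot))$ on $\R^*$.
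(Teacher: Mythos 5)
The overall strategy you outline --- compactify the ellipsoid, translate the packing problem into the existence of a symplectic form in a prescribed class on a blow-up of a rational surface, and solve that by (singular) inflation --- is the same as the paper's, but the two steps you assert without proof are precisely where the work lies. First, the claim that packing balls into the \emph{open} domain $E(1,a)$ ``amounts to'' a cohomological condition on the compactification is not automatic: producing a packing of $\P^2(a)$ by $\sqcup B(w_j)\sqcup B(\lambda_i)$, hence by $E(a-1,a)\sqcup B(\lambda_i)$ via Theorem \ref{mcduffthm}, gives no control on where the balls $B(\lambda_i)$ sit relative to the compactifying ellipsoid. The paper devotes Lemma \ref{le:directed} (directed packings, proved by singular inflation) to placing a slightly shrunk $\tau E(a-1,a)$ in standard position relative to two lines while keeping the balls away from them, and then needs a contracting Liouville flow plus a limiting argument (uniqueness of ball packings up to isotopy) to push full-size open balls into $E(1,a)$. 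You would need either this or McDuff's inner/outer approximation; as written this is a gap. Second, the quantitative reason small balls are unobstructed is not just ``Taubes/Li--Liu plus inflation'': it is Biran's criterion together with Lemma \ref{le:dom}, the Cauchy--Schwarz estimate $d_\Om\geq(1-\kappa)/(3+\sqrt p)$ showing the threshold depends only on the volume of the blow-up. Without such a uniform estimate the local boundedness cannot come out.

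Concerning the ``moreover'' clause: the content actually needed (and used in the proof of Theorem \ref{thm:irrstabclosed}) is a \emph{locally uniform positive lower bound} on the packing threshold --- one $\lambda_0>0$ such that $E(1,a')$ satisfies $\cp(\lambda_0)$ for all $a'$ near $a$. Your closing observation that non-squeezing forces the threshold to be at most $\min(1,a)$ is an upper bound and contributes nothing to this. Your actual attempt at the lower bound --- a finite family of obstructing exceptional classes that is ``locally constant in $a$'' --- is unsubstantiated and sits uneasily with your own (correct) remark that the number of weights blows up as $a$ varies: there is no such local constancy. What the paper proves instead is Lemma \ref{le:stabw}: as $a'\to a$ the first $p(a)$ weights of $\bw(a')$ converge to those of $\bw(a)$ and all the remaining ones tend to $0$, so the blown-up form converges and the extra exceptional spheres are small; combined with the volume-only bound on $d_\Om$ this yields the locally uniform threshold. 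Note also that the paper \emph{does} reduce to rational $a$ by inner approximation and uniqueness of ball packings up to isotopy --- the step you declare impossible; what fails naively is only the uniformity in $a$, not the pointwise reduction.
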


Note that since any rational symplectic manifold has full packing by an ellipsoid \cite{moi3}, this theorem clearly implies Biran's original result.
\begin{cor} \label{cor1} All closed  symplectic $4$-manifolds with rational symplectic class have the packing stability.
\end{cor}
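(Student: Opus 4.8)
The plan is to derive the corollary as an essentially immediate consequence of Theorem \ref{thm:irrstabell}, combined with McDuff's theorem \cite{moi3} that every closed symplectic $4$-manifold with rational symplectic class admits a full filling by an open ellipsoid. First I would record the trivial implication that strong packing stability implies packing stability: if a manifold $X$ satisfies $\cp(\lambda)$, then specializing the defining condition to a constant collection $\lambda_1=\dots=\lambda_N=\lambda'$ with $\lambda'<\lambda$ shows, for every $N$, that the disjoint union of $N$ copies of $B(\lambda')$ embeds in $X$ precisely when $\frac12 N(\lambda')^2\le\vol(X)$; hence $X$ has packing stability with $\mu=\lambda$.

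Next, let $(M,\om)$ be a closed symplectic $4$-manifold with $[\om]\in H^2(M,\Q)$. By \cite{moi3} there are $a,b>0$ and a symplectic embedding $E(a,b)\omhra M$ with $\vol(E(a,b))=\vol(M)$, so that the image of the open ellipsoid misses only a set of measure zero. Theorem \ref{thm:irrstabell} provides some $\lambda_0>0$ for which $E(a,b)$ satisfies $\cp(\lambda_0)$. Given any collection $(\lambda_i)$ with $\lambda_i<\lambda_0$ for all $i$ and $\frac12\sum\lambda_i^2\le\vol(M)=\vol(E(a,b))$, property $\cp(\lambda_0)$ yields $\sqcup B(\lambda_i)\omhra E(a,b)$, and composing with $E(a,b)\omhra M$ gives $\sqcup B(\lambda_i)\omhra M$; the reverse implication is just the volume obstruction. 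Thus $M$ itself satisfies $\cp(\lambda_0)$, and in particular it has the packing stability claimed, with $\mu=\lambda_0$.

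I do not expect a genuine obstacle here: all the substance sits in Theorem \ref{thm:irrstabell}. The one point to handle with care is the use of the non-strict volume inequality; this is exactly why it matters that McDuff's filling is by an \emph{open} ellipsoid and that we pack \emph{open} balls (the very full filling phenomenon isolated in the remark following the definition of $\cp(\lambda)$). I would also note that only the first sentence of Theorem \ref{thm:irrstabell} is used here; the local boundedness of $\Lambda(E(1,a))$ is not needed for the corollary, but is what will allow the passage to irrational symplectic forms in the proof of Theorem \ref{thm:irrstabclosed}, by approximating an irrational class by rational ones.
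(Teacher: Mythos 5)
Your argument is correct and is exactly the paper's (one-line) proof: the corollary follows by composing a volume-filling embedding of an open ellipsoid into $M$ (which exists for rational $[\om]$ by \cite{moi3}) with the strong packing stability of ellipsoids from Theorem \ref{thm:irrstabell}, and your care about open balls versus the non-strict volume inequality is the right point to flag. The only nit is attribution: the full-filling-by-an-ellipsoid result cited as \cite{moi3} is due to Opshtein, not McDuff.
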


Strong packing stability for ellipsoids does not immediately imply packing stability for all closed manifolds, however, because the following remains a conjecture.

\begin{conj} \label{conj1}
Every (closed) $4$-dimensional symplectic manifold is fully packed by one ellipsoid.
\end{conj}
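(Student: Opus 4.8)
\noindent\emph{A plan of attack (the statement is a conjecture; what follows is a strategy, not a proof).}

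The obvious first move is to reduce to the rational case handled in \cite{moi3}. Choose rational classes $c_n\in H^2(M;\Q)$ with $c_n\to[\om]$ and symplectic forms $\om_n$ in these classes that are $C^\infty$-close to $\om$ (write $c_n-[\om]=\sum_j t_{n,j}[\beta_j]$ in a fixed basis of closed two-forms, with $t_{n,j}\to 0$, and set $\om_n=\om+\sum_j t_{n,j}\beta_j$, which is symplectic for $n$ large); then $\mathrm{Vol}(M,\om_n)\to\mathrm{Vol}(M,\om)$. By \cite{moi3} each $(M,\om_n)$ is fully filled by a single ellipsoid, say $E(a_n,b_n)\omhra(M,\om_n)$ with $\tfrac12 a_n b_n=\mathrm{Vol}(M,\om_n)$. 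One would then try to extract a convergent subsequence $(a_n,b_n)\to(a,b)$ and promote the family $E(a_n,b_n)\omhra(M,\om_n)$ to a genuine very full filling $E(a,b)\omhra(M,\om)$.

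A second line uses the results of the present paper and pinpoints where the difficulty lies. By Theorem~\ref{thm:irrstabclosed} the manifold $M$ has property $\cp(\lambda_M)$ for some $\lambda_M>0$. Fix an integer $N$ so large that $\lambda:=\sqrt{2\,\mathrm{Vol}(M)/N}<\lambda_M$; then the volume inequality for $N$ balls of equal capacity $\lambda$ holds with equality, so $\sqcup_{i=1}^N B(\lambda)\omhra M$. Now $(\lambda,\dots,\lambda)$ ($N$ entries) is exactly the weight expansion of the ellipsoid $E(\lambda,N\lambda)$, and $\mathrm{Vol}\big(E(\lambda,N\lambda)\big)=\tfrac12\lambda\cdot N\lambda=\mathrm{Vol}(M)$. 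So the conjecture would follow from the \emph{very full} case of the ball--ellipsoid correspondence: that a symplectic filling of $M$ by the balls of the weight expansion of an ellipsoid $\mathcal E$ with $\mathrm{Vol}(\mathcal E)=\mathrm{Vol}(M)$ can be re-assembled into a filling $\mathcal E\omhra M$. For fillings of strictly smaller volume such a re-assembly is available through singular inflation in the blow-up of $M$ along the embedded balls; in this formulation, everything comes down to the borderline equal-volume instance of that technique.

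I expect the crux to be the same in both approaches: producing one honest ellipsoid of full volume, rather than a disconnected union of balls or a family of ever thinner ellipsoids, inside $M$. In the approximation approach there is no a priori control on the eccentricities $b_n/a_n$ — the construction of \cite{moi3} uses the rational symplectic class in an essential way — and if $b_n/a_n\to\infty$ the ellipsoids degenerate and carry no limiting information; symplectic embeddings, moreover, do not survive $C^0$ limits without uniform quantitative estimates, which there is no evident way to obtain here. In the re-assembly approach the inflation argument typically needs an arbitrarily small amount of extra room, so it would only give $E\big((1-\eps)\lambda,(1-\eps)N\lambda\big)\omhra M$ for every $\eps>0$, not at $\eps=0$, and as $N$ grows these ellipsoids again become arbitrarily thin. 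Overcoming this — either a uniform eccentricity-and-embedding bound for the rational approximants, or the exact equal-volume case of singular inflation re-assembling weight-expansion balls into an ellipsoid — is, I believe, essentially the whole content of the conjecture.
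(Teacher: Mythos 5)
The statement you were asked to prove is Conjecture \ref{conj1}, which the paper explicitly leaves open: no proof of it appears anywhere in the text. The authors state it precisely to explain why strong packing stability for ellipsoids (Theorem \ref{thm:irrstabell}) does not immediately yield Theorem \ref{thm:irrstabclosed}, and they then sidestep it by decomposing an arbitrary closed $4$-manifold into finitely many ellipsoids \emph{and pseudo-balls} rather than a single ellipsoid. You correctly recognized the statement as open and submitted a strategy rather than a proof, so there is no proof in the paper to compare against; what I can assess is whether your analysis of the obstructions is accurate, and it largely is.

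Your two routes, and the difficulties you flag, are the right ones. For the approximation route, the absence of any control on the eccentricities $b_n/a_n$ of the full fillings of the rational approximants is indeed the fatal point, and symplectic embeddings do not survive $C^0$ limits of forms without quantitative input. For the re-assembly route, note one further gap beyond the borderline equal-volume issue you identify: McDuff's ball--ellipsoid correspondence (Theorem \ref{mcduffthm}) is stated only for \emph{rational} symplectic $4$-manifolds, so even with strict volume inequality the passage from $\sqcup_{i=1}^N B(\lambda)\omhra M$ back to $E(\lambda,N\lambda)\omhra M$ is not available off the shelf for irrational $M$; one would have to rerun the singular inflation of \cite{mcop} in the irrational setting, which is essentially what this paper does for its many-piece decomposition but not for a single full-volume ellipsoid. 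Since the conjecture only asks for \emph{some} ellipsoid, a single suitable $N$ would suffice, but that is exactly where the equal-volume case of the inflation bites. In short: your proposal is an honest and accurate map of an open problem, not a proof, and the paper offers none either.
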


Nevertheless we will show that any symplectic $4$ manifold can be decomposed (up to a subset of volume $0$) into finitely many open symplectic manifolds, each of which has a Hamiltonian toric free action with convex associated polytope, namely ellipsoids and pseudo-balls. Theorem \ref{thm:irrstabclosed} will then follow from Theorem \ref{thm:irrstabell} and the following strong packing stability for pseudo-balls.

\begin{thm}\label{thm:irrstabpseudo}
The pseudo-balls have the strong packing stability. Moreover, the function $\Lambda(T(a,b,\alpha,\beta))$ is locally bounded on $\{\alpha<a<\alpha+\beta, \beta<b<\alpha+\beta\}\subset \R^4$.
\end{thm}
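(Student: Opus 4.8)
The plan is to deduce the pseudo-ball case from the ellipsoid case, Theorem~\ref{thm:irrstabell}, the bridge being \emph{very full} symplectic ellipsoids sitting inside $T=T(a,b,\alpha,\beta)$. The key geometric input I would establish is: for every $\eps>0$ there is a symplectic ellipsoid $E_\eps\omhra T$ with $\vol(E_\eps)\ge\vol(T)-\eps$ whose shape, written $E_\eps=\tau_\eps E(1,k_\eps)$, stays in a region that is bounded as $\eps\to0$ and depends continuously, hence locally boundedly, on $(a,b,\alpha,\beta)$. The natural starting point is the moment quadrilateral $P=\conv\langle(0,0),(0,a),(b,0),(\alpha,\beta)\rangle$ of $T$ (of area $\vol(T)=\tfrac12(a\alpha+b\beta)$): since $P$ contains the triangle $\conv\langle(0,0),(b,0),(0,a)\rangle$, one already has an honest embedding $E(b,a)\omhra T$, but it is not full, the deficit being the area of the ``bump'' $\conv\langle(0,a),(b,0),(\alpha,\beta)\rangle$. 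To recover this volume I would enlarge $E(b,a)$ inside $T$ by a singular inflation, in the style of \cite{moi6,mcop}, along a suitable symplectic curve (working in a symplectic compactification of $T$, whose complement is negligible, possibly combined with a rational approximation of the parameters). The hypotheses $a>\alpha$, $b>\beta$ and $a,b<\alpha+\beta$ are exactly what make the relevant homology class effective and the inflation admissible, and they keep the eccentricity $k_\eps$ under control.

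\textbf{Transfer of strong packing stability.} Granting this input, Theorem~\ref{thm:irrstabell} together with the scaling relation $\Lambda(\tau E(1,k))=\tau\,\Lambda(E(1,k))$ shows that the bounded family of ellipsoids $E_\eps$ all enjoy property $\cp(\lambda)$ for one common $\lambda>0$. Let $(\lambda_i)$ be any collection with $\lambda_i<\lambda$ and $\tfrac12\sum\lambda_i^2\le\vol(T)$. If this inequality is strict, choose $\eps$ with $\tfrac12\sum\lambda_i^2\le\vol(T)-\eps\le\vol(E_\eps)$ and conclude $\sqcup B(\lambda_i)\omhra E_\eps\omhra T$. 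In the equality case $\tfrac12\sum\lambda_i^2=\vol(T)$, the strict case applied to the rescaled collections $\bigl((1-\delta)\lambda_i\bigr)$ gives $\sqcup B\bigl((1-\delta)\lambda_i\bigr)\omhra T$ for every $\delta>0$, and since the $B(\lambda_i)$ are \emph{open} a standard exhaustion (Alexander-trick) argument, in the spirit of the very full fillings of \cite{lamcsc}, assembles these into a single embedding $\sqcup B(\lambda_i)\omhra T$. Thus $T$ satisfies $\cp(\lambda)$, i.e.\ has strong packing stability.

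\textbf{Local boundedness of $\Lambda$.} Two bounds are needed. Since the construction of the $E_\eps$ depends continuously on $(a,b,\alpha,\beta)$, over a neighbourhood of any point of $\{\alpha<a<\alpha+\beta,\ \beta<b<\alpha+\beta\}$ the shapes involved stay in one fixed compact set, so the previous paragraph supplies a single $\lambda>0$ with $\cp(\lambda)$ valid for all those $T$ at once; this controls $\Lambda$ from one side uniformly. For the other bound one uses the inclusion $T\omhra B(\alpha+\beta)$ and a pair of balls of capacity just above $\tfrac12(\alpha+\beta)$: since $\vol(T)=\tfrac12(a\alpha+b\beta)>\tfrac14(\alpha+\beta)^2$ (as $a>\alpha$, $b>\beta$), such a pair is volume-admissible for $T$ yet obstructed in $B(\alpha+\beta)$, so $\cp(\lambda)$ fails for $T$ once $\lambda$ exceeds $\tfrac12(\alpha+\beta)$ plus an explicit locally positive margin. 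Combining the two, $(a,b,\alpha,\beta)\mapsto\Lambda(T(a,b,\alpha,\beta))$ is locally bounded on the stated open set.

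\textbf{Main obstacle.} The crux is the geometric input of the first paragraph --- very full ellipsoids in $T$ \emph{with controlled eccentricity}. Merely exhibiting some full-volume ellipsoid would not suffice: if $k_\eps$ blew up as $\eps\to0$, the corresponding $\Lambda(E_\eps)$ could degenerate and the transfer step would produce no uniform $\lambda$. Keeping this quantitative control while filling $T$ up to $\eps$ is precisely what the singular inflation of \cite{moi6,mcop} is designed to provide: one enlarges the non-full ellipsoid $E(b,a)\subset T$ inside $T$ itself rather than passing through a closed symplectic filling, where the recompactifying divisor would obstruct placing the balls.
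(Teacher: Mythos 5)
Your strategy --- find a very full ellipsoid $E_\eps\omhra T$ of controlled eccentricity and transfer Theorem \ref{thm:irrstabell} --- is genuinely different from the paper's, and it contains a real gap precisely at the step you yourself flag as the crux. The existence of a volume-filling ellipsoid in a pseudo-ball (let alone one whose shape $\tau_\eps E(1,k_\eps)$ stays in a compact family as $\eps\to 0$) is an open-domain analogue of Conjecture \ref{conj1}, and the entire architecture of the paper is designed to \emph{avoid} having to prove such a statement: this is why pseudo-balls are introduced as a second building block at all, rather than being reduced to ellipsoids. Your sketched justification by singular inflation does not go through as described. The inflation of \cite{moi6,mcop} is performed in a closed manifold along a curve in a prescribed homology class; ``enlarging $E(b,a)$ inside $T$ itself'' is not a defined operation, and the natural compactification of $T$ is $\P^2(\alpha+\beta)$, whose complement is the union of two ellipsoids $E\cup E'$ of definite (not negligible) volume --- exactly the ``recompactifying divisor'' problem you acknowledge, but do not resolve. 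A secondary soft spot is the equality case: passing from embeddings of $\sqcup B((1-\delta)\lambda_i)$ for all $\delta>0$ to an embedding of $\sqcup B(\lambda_i)$ is not an elementary exhaustion argument; the paper gets it from the uniqueness up to isotopy of ball packings in rational surfaces (\cite{mcop,mcduff4}), a nontrivial input. (Also note that ``locally bounded'' for $\Lambda$ means bounded away from $0$ on compact sets --- the $\inf$ in the definition is evidently a $\sup$ --- so your ``other bound'' showing $\cp(\lambda)$ fails for large $\lambda$ is true but beside the point.)

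For contrast, the paper never looks for an ellipsoid inside $T$. It writes $T(a,b,\alpha,\beta)=\P^2(\alpha+\beta)\priv(E\cup E')$ for two explicit toric ellipsoids (Lemma 4.2 and Proposition \ref{prop:Tpackstab}), replaces $E$ and $E'$ by their ball weight decompositions via Theorem \ref{mcduffthm}, obtains strong packing stability of the resulting blow-up of $\P^2$ from Lemma \ref{le:ratstabpack}, and then uses Lemma \ref{le:directed} to direct the two re-embedded ellipsoids along the coordinate lines so that their complement is standard, after which a Liouville flow pushes the extra balls into $T$; local boundedness of $\Lambda$ comes from the continuity of the weights (Lemma \ref{le:stabw}). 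In other words, the hard geometric work is outsourced to Lemma \ref{le:directed} and to Biran-type estimates on blow-ups of $\P^2$, both of which are proved in the paper, whereas your route outsources it to an unproved full-filling statement. If you want to salvage your approach, you would need to actually construct the $E_\eps$ --- and the most plausible construction would itself pass through the complement description of $T$ in $\P^2$ and the directed-packing lemma, at which point you have reproduced the paper's argument with an extra, unnecessary layer.
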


\paragraph{Related results.}
In \cite{buhi,buhi2}, the first two authors proved that rational symplectic manifolds in all dimensions have packing stability. It remains an open question whether higher dimensional rational manifolds have strong packing stability. The proof argued as in Corollary \ref{cor1} by first showing that all ellipsoids have packing stability and then remarking that any {\it rational} symplectic manifold admits a volume filling embedding from an ellipsoid. The ellipsoid packing stability ultimately relied on an embedding result for $4$-dimensional ellipsoids.

\begin{thm*} \label{ellemb} [Buse-Hind] There exists a continuous function $f:[1,\infty) \to [1,\infty)$ such that if $b>f(a)$ then there exists a symplectic embedding $\lambda E(1,b) \to E(1,a)$ if any only if the volume condition is satisfied.
\end{thm*}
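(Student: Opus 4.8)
The \emph{only if} direction is immediate and needs no hypothesis on $b$: volume is a symplectic invariant, so $\lambda E(1,b)\omhra E(1,a)$ forces $\tfrac12\lambda^2 b=\vol(\lambda E(1,b))\le\vol(E(1,a))=\tfrac12 a$, that is, the volume condition $\lambda^2 b\le a$. All the content lies in the converse, and my plan is to reduce it to a comparison of ECH capacities and then to an essentially number-theoretic estimate on lattice points.

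\emph{Step 1: reduction to a capacity inequality.} By McDuff's theorem that the ECH capacities form a complete set of obstructions for embeddings of one ellipsoid into another, $\lambda E(1,b)\omhra E(1,a)$ holds if and only if $c_k(\lambda E(1,b))\le c_k(E(1,a))$ for every $k\ge 0$; here $c_k(E(p,q))$ denotes the $(k+1)$-st smallest element, with multiplicity, of the sequence $(mp+nq)_{m,n\ge 0}$, and $c_k(\lambda E(1,b))=\lambda\,c_k(E(1,b))$ by conformality. Since the left-hand side increases with $\lambda$, it suffices to treat the volume-critical value $\lambda=\sqrt{a/b}$, and the whole statement becomes the family of inequalities
\begin{equation*}
\frac{c_k(E(1,b))^2}{b}\ \le\ \frac{c_k(E(1,a))^2}{a}\qquad(k\ge 0),
\end{equation*}
i.e.\ the assertion that the volume-normalized capacity profile of the long ellipsoid $E(1,b)$ lies below that of the fixed ellipsoid $E(1,a)$.

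\emph{Step 2: the shape of the two profiles.} The key structural input is that for $0\le k\le\lfloor b\rfloor$ one has $c_k(E(1,b))=k$, since only the points $(m,0)$ contribute below the level $b$, while a Gauss-type count of lattice points in the triangle $\{m+nx\le t\}$ gives, for fixed $x$, a smooth expansion whose $k$-independent correction is $-\tfrac{x+1}{2}$:
\begin{equation*}
c_k(E(1,x))=\sqrt{2xk}-\tfrac{x+1}{2}+O_x(1)\qquad(k\to\infty).
\end{equation*}
Substituting into the normalized inequality, the leading terms $2k$ cancel and the comparison is governed at next order by $\tfrac{x+1}{\sqrt x}=\sqrt x+\tfrac{1}{\sqrt x}$, which is increasing for $x\ge 1$. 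Hence for $b>a$ the long ellipsoid carries the \emph{more negative} normalized correction, so the inequality of Step~1 holds with a margin growing like $\sqrt{2k}\,(\sqrt b-\sqrt a)$; in particular it is satisfied for all sufficiently large $k$ as soon as $b>a$.

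\emph{Step 3: the binding range, and the choice of $f$.} The delicate regime is the transition $k\approx\lfloor b\rfloor$, where $c_k(E(1,b))$ leaves its linear segment $c_k=k$ and the growing margin of Step~2 is not yet in force; evaluating the normalized inequality there reduces it, to leading order, to a threshold of the shape $b\gtrsim 2(a+1)^2/a$, and it is precisely such a bound that $f$ must encode. I would therefore \emph{define} $f(a)$ as the infimum of the $b_0$ beyond which all inequalities of Step~1 hold for $b>b_0$. The main obstacle is to upgrade the heuristic expansions above into \emph{effective, uniform-in-$k$} estimates, controlling the $O_x(1)$ lattice fluctuations well enough to certify the finitely many potentially critical indices $k$ of order $b$; this is the genuine work, and it is where the arithmetic of the sequences $(m+nx)$ enters. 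Finally, local boundedness and continuity of $f$ should follow from the facts that each $c_k(E(1,\cdot))$ is continuous, piecewise linear, and monotone in its parameter and that, on any compact range of $a$, only finitely many indices $k$ are active; a standard monotonicity-and-compactness argument then produces a continuous $f:[1,\infty)\to[1,\infty)$, which is exactly the local boundedness of $\Lambda(E(1,a))$ used elsewhere in the paper.
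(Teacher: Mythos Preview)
Your outline is sound and, as you note yourself, follows the Embedded Contact Homology route: reduce via McDuff's completeness theorem to the family of inequalities $c_k(E(1,b))^2/b\le c_k(E(1,a))^2/a$, use the lattice description of $c_k$ to see that the asymptotics favour the long ellipsoid, and then isolate the critical window $k\approx b$ to extract a threshold of order $2(a+1)^2/a$. The honest gap you flag --- turning the $O_x(1)$ lattice remainders into effective, $k$-uniform bounds over the finitely many critical indices --- is exactly where the work lies, and your argument for continuity of $f$ is not quite right as stated (an infimum over a family of thresholds need not be continuous), though this is harmless since any locally bounded $f$ can be replaced by a continuous majorant.

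You should be aware, however, that the paper does \emph{not} prove this theorem; it is quoted from \cite{buhi2} as background, and the paper explicitly says that the original proof there ``relied on Embedded Contact Homology'' --- which is precisely your approach. The paper then remarks that the theorem \emph{also} follows from the strong packing stability established in Theorem~\ref{thm:irrstabell}, and that alternative route is quite different from yours. Namely: by McDuff's Theorem~\ref{mcduffthm}, $\lambda E(1,b)\omhra E(1,a)$ iff $\sqcup_i \lambda B(w_i(b))\omhra E(1,a)$; since all weights satisfy $w_i(b)\le 1$ and the volume condition forces $\lambda\le\sqrt{a/b}$, every ball has capacity at most $\sqrt{a/b}$. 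Strong packing stability of $E(1,a)$ then gives the embedding as soon as $\sqrt{a/b}<\Lambda(E(1,a))$, i.e.\ $b>a/\Lambda(E(1,a))^2$, and the local boundedness of $\Lambda$ in Theorem~\ref{thm:irrstabell} lets one choose a continuous $f$.

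So your proposal is a correct sketch of the \emph{original} ECH proof, not of the paper's own argument; the paper's contribution here is precisely to replace the capacity estimates you are planning by the inflation-based packing stability, thereby avoiding ECH altogether.
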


This immediately implies ellipsoid packing stability in four dimensions since an ellipsoid $E(1,k)$ can be fully filled by $k$ balls. The proof of the above theorem in \cite{buhi2} relied on Embedded Contact Homology, although the theorem in fact also follows from the strong packing stability we establish here. Indeed, work of McDuff says that an ellipsoid embedding in dimension $4$ exists whenever there exists an embedding of a union of balls of appropriate sizes. It would be interesting to compare the corresponding bounds on the function $f$.

Recent work of Latschev, McDuff and Schlenk in \cite{lamcsc} implies packing stability of $4$-dimensional tori with linear symplectic forms (most of which are irrational). Indeed, if $T$ is a linear $4$-dimensional torus not a product of $2$-dimensional tori of equal area then we have an embedding $\sqcup B(\lambda_i)\omhra X$ whenever the volume obstruction $\frac{1}{2}\sum \lambda_i^2 < \mathrm{Vol}(X)$ is satisfied. For the product $T(\mu, \mu)$ of two tori of area $\mu$ this remains true given the additional condition that $\lambda_i<\mu$ for all $i$. However whether or not there exist very full fillings of tori under just these hypotheses remains an open question.



\paragraph{Organization of the paper and description of methods.}
As mentioned
above,
the main ingredient for our arguments is to show that two types of open symplectic manifolds, an ellipsoid and a pseudo-ball,
have strong packing stability. The conclusion of the proof of packing stability for ellipsoids (Theorem \ref{thm:irrstabell}) is found in section \ref{section33} and explain how the case of the pseudo-ball (Theorem \ref{thm:irrstabpseudo}) follows along the same lines in section \ref{packpseudo}.

Both these results are consequences of two technical lemmas, which we deal with first. One is a basic result on packing stability of blow-ups of $\P^2$,
proved in section \ref{firstlemma},
which serves as a building block for all our other packing stability results.

\begin{lemma}\label{le:ratstabpack} All symplectic forms on the $p$-fold blow-up $\hat \P^2_p$ of $\P^2$ have the strong packing stability. Moreover  $\Lambda(\hat \om)$ is bounded from below by a quantity which depends only on the volume of $(\P^2_p,\hat \om)$.
\end{lemma}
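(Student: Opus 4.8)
\emph{Strategy and Step 1 (reduction to the symplectic cone).} The plan is to turn the ball–packing problem into a positivity question about the symplectic cone of a further blow–up, and then to use that the balls are small. Write $[\hat\om]=A=dL-\sum_{i=1}^pm_iE_i$, so $\vol(\hat\P^2_p,\hat\om)=\tfrac12A^2=:V$. By the theorem of McDuff--Polterovich relating ball embeddings to symplectic blow–ups — together with a routine limiting argument over slightly smaller closed balls, which handles open balls and the non–strict volume inequality — $\sqcup_iB(\lambda_i)\omhra(\hat\P^2_p,\hat\om)$ holds if and only if the class $c:=A-\sum_i\lambda_iF_i$ on $\hat\P^2_{p+k}$ lies in the closure of the symplectic cone, $F_1,\dots,F_k$ being the new exceptional classes. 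By the Li--Liu description of the symplectic cone of a rational surface, this means: $c^2\ge0$ (in the relevant component) and $c\cdot E\ge0$ for every class $E\in\ce_K$ of an embedded symplectic $(-1)$–sphere. Since $c^2=2V-\sum_i\lambda_i^2$, the first condition is precisely the volume obstruction, so everything reduces to establishing the inequalities $c\cdot E\ge0$.

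\emph{Step 2 (organising the exceptional classes).} Write $E=d_EL-\sum_ia_iE_i-\sum_jb_jF_j$ and put $\pi(E):=d_EL-\sum_ia_iE_i$, so that $c\cdot E=A\cdot\pi(E)-\sum_j\lambda_jb_j$. If $E$ is one of the $E_i$ or $F_j$, then $c\cdot E$ is $m_i>0$ or $\lambda_j>0$ because $\hat\om$ is symplectic; and if all $b_j=0$ then $E$ is an exceptional class of $\hat\P^2_p$ itself and $c\cdot E=A\cdot E>0$, again because $\hat\om$ is symplectic. It therefore suffices to treat the classes $E$ meeting some $F_j$, i.e.\ those with $d_E\ge1$ and $\sum_jb_j\ge1$. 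For these, $\pi(E)^2=\sum_jb_j^2-1\ge0$, so $\pi(E)$ lies in the closed forward cone and the reverse Cauchy--Schwarz inequality gives $A\cdot\pi(E)\ge\sqrt{2V\,(\sum_jb_j^2-1)}$; on the other hand, since $\lambda_i<\lambda$ and $\tfrac12\sum_i\lambda_i^2\le V$ (so $k<2V/\lambda^2$), one only obtains $\sum_j\lambda_jb_j\le\sqrt{\sum_i\lambda_i^2}\,\sqrt{\sum_jb_j^2}\le\sqrt{2V\sum_jb_j^2}$. These two estimates just fail to prove $c\cdot E\ge0$.

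\emph{Step 3 (closing the gap; where the difficulty lies).} The whole remaining content is to beat this deficit — which is of order $\sqrt{V/\sum_jb_j^2}$ — for every $\lambda$ below an explicit positive threshold. The key observation is that the deficit only needs to be beaten for exceptional classes of small degree $d_E$: when $d_E$ is large, $E$ meets $\sim3d_E$ of the $F_j$, and the volume bound prevents too many of the corresponding $\lambda_j$ from being comparable to $\lambda$, so the required inequality becomes slack. One thus bounds $d_E\le D_0$ and treats the finitely many reduced shapes of $E$, using that a Cremona–reduced exceptional class of small degree has small coefficients, that at most one $m_i$ can exceed $d/2$ (because $A\cdot(L-E_i-E_j)>0$), and that $A\cdot\pi(E)>0$ since $\pi(E)$ is effective; this yields a threshold $\lambda_0>0$, hence $\cp(\lambda_0)$ and $\Lambda(\hat\om)\ge\lambda_0$, with $\lambda_0$ explicit. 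Alternatively — and this is the method of the paper — one can bypass the cone bookkeeping and build the embeddings geometrically: first embed the $B(\lambda_i)$ at a tiny scale $\delta\lambda_i$ (automatic, such balls being Darboux–small and disjoint from the exceptional divisors), blow them up to disjoint exceptional spheres $S_i$ in the classes $F_i$, and then apply the singular inflation technique of \cite{moi6,mcop} to deform the form on $\hat\P^2_{p+k}$ from class $A-\delta\sum_i\lambda_iF_i$ towards $A-\sum_i\lambda_iF_i$, inflating along a normal–crossing configuration assembled from the spheres $S_i$ and a few lines and conics, and finally blow back down. On either route the main obstacle is the same: making the threshold uniform and explicit — controlled in the way needed for the later reduction of ellipsoids and pseudo–balls — and, on the inflation route, verifying the positivity hypotheses of the singular inflation theorem for the chosen configuration.
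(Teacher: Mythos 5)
Your overall strategy---reducing the packing to positivity of $c=A-\sum_i\lambda_iF_i$ against the exceptional classes of $\hat\P^2_{p+k}$ via the Li--Liu description of the symplectic cone---is a legitimate route, and it is essentially the route Biran took. The paper, however, does not re-derive any of this: it simply quotes Biran's theorem that $\Lambda(\hat\P^2_p,\hat\om)$ is bounded below by $d_\Om=\inf\left\{\Om(B)/c_1(B)\;:\;B^2\ge0,\ \Om(B)>0,\ c_1(B)\ge2\right\}$, and the entire content of its proof is the elementary estimate $d_\Om\ge(1-\kappa)/(3+\sqrt p)$ of Lemma~\ref{le:dom}, with $\kappa=\sqrt{\sum\lambda_i^2}=\sqrt{1-2\vol}$, which gives the required volume-dependent lower bound. (Your closing claim that ``the method of the paper'' for this lemma is singular inflation is inaccurate: inflation is used later, for Lemma~\ref{le:directed}, not here.)

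The genuine gap is in Steps 2--3. As you concede, the Cauchy--Schwarz bound $\sum_j\lambda_jb_j\le\sqrt{2V\sum_jb_j^2}$ is too weak, and Step 3 does not repair it: the exceptional classes of a many-point blow-up have unbounded degree, your assertion that classes of large degree are automatically ``slack'' is unsubstantiated (it is exactly the point at issue), and no bound $D_0$ or threshold $\lambda_0$ is actually produced --- yet the lemma demands a threshold controlled uniformly in terms of the volume, since that uniformity is what the later arguments for ellipsoids and pseudo-balls consume. The correct repair needs no degree bound and no case analysis: use adjunction on $E$. Since $c_1(E)=1$, you get $\sum_jb_j=c_1(\pi(E))-1<c_1(\pi(E))$, hence $\sum_j\lambda_jb_j<\lambda\,c_1(\pi(E))$ using only $\lambda_j<\lambda$; so $c\cdot E\ge0$ follows from $\Om(\pi(E))\ge\lambda\,c_1(\pi(E))$, i.e.\ from $\lambda\le d_\Om$, because $\pi(E)^2=\sum_jb_j^2-1\ge0$ and $c_1(\pi(E))=1+\sum_jb_j\ge2$ make $\pi(E)$ an admissible class in the definition of $d_\Om$. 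In other words, the quantity to bound from below is $\inf\Om(B)/c_1(B)$, not the light-cone pairing, and Cauchy--Schwarz belongs instead in the estimates $|\sum m_i\lambda_i|\le\kappa|k|$ and $\sum m_i\le\sqrt p\,|k|$ for $B=kL-\sum m_iE_i$, exactly as in the paper's Lemma~\ref{le:dom}. Without this (or a completed version of your degree-truncation argument), your proof does not close.
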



The second lemma,
of independent interest, allows us to isotope any packing by ellipsoids into a good position with respect to a given symplectic curve, assuming only the necessary area requirements.

\begin{lemma}\label{le:directed}
Let $\ds\sqcup_{i=1}^k E(a_i,b_i)\sqcup_{i=1}^{k'} E(a_i',b_i')\sqcup_{i=1}^{k''} E(a_i'',b_i'')\sqcup_{i=1}^{k^{(3)}} E(a_i^{(3)},b_i^{(3)})\overset{\phi}{\hra} \P^2$ be a symplectic embedding, which is the restriction of an embedding of the disjoint union of closed ellipsoids, and let $C$ an immersed,
possibly reducible,
symplectic curve with positive self-intersections only.
We denote by $E_i$ the ellipsoid $E(a_i,b_i)$, by $E'_i$ the ellipsoid $E(a_i',b_i')$ and so on.

Let us assign a component of $C$ to each ellipsoid $E(a_i,b_i)$ and to each $E(a_i',b_i')$. Suppose we can also assign a self-intersection point of $C$ to each ellipsoid $E(a_i'',b_i'')$ together with a different branch for the two axes of the ellipsoids. Assume that each component of $C$ has area strictly greater than the sum of all $a_p$, $b_q'$, $a_r''$ and $b_s''$ for which $E(a_p,b_p)$, $E(a_q',b_q')$, the first axis of $E(a_r'',b_r'')$ and the second axis of $E(a_s'',b_s'')$ are assigned to it.

Then $C$ is symplectically isotopic to a $C'$ with the following properties:
$$
\begin{array}{l}
{E}_i\cap C'=\phi(\{w=0\})=\phi(\D(a_i)\times \{0\}),\\
{E}_i'\cap C'= \phi(\{z=0\})= \phi(\{0\}\times \D(b_i'))\\
{E}_i''\cap C'= \phi(\{z=0\}\cup \{w=0\})=\phi(\D(a_i'')\times \{0\}\cup \{0\}\times \D(b_i'')) \\
{E}_i^{(3)}\cap C'=\emptyset
\end{array}
$$

\end{lemma}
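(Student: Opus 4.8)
The strategy is to perform the isotopy one ellipsoid at a time, using the standard picture of an ellipsoid $E(a,b)$ as a symplectic neighbourhood of a disc. Recall that $E(a,b)$ deformation retracts onto the disc $\D(a)\times\{0\}$ (of area $a$), and a neighbourhood of this disc inside $E(a,b)$ looks like a disc bundle; more precisely, for $a\le b$ the whole ellipsoid is a relative symplectic neighbourhood of this disc in the sense that it is determined, up to symplectomorphism rel the disc, by the area of the disc together with the "width" $b$ in the normal direction. The first move is therefore to reduce the statement to a local one: given a single closed ellipsoid $\bar E(a_i,b_i)$ embedded in $\P^2$ and a symplectic curve component $C_j$ with area strictly larger than $a_i$ (plus the contributions of the other ellipsoids assigned to $C_j$), I want to isotope $C_j$ so that it coincides, inside $\phi(\bar E(a_i,b_i))$, with $\phi(\D(a_i)\times\{0\})$. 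By the symplectic neighbourhood theorem for curves (Weinstein), once $C_j$ passes through $\phi(\bar E(a_i,b_i))$ as an embedded disc of the correct area and homology class inside the ellipsoid, an ambient isotopy straightens it to the coordinate disc.

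The key geometric input is an isotopy extension / "moving a curve across a ball" argument in the spirit of McDuff's work on embeddings and the singular inflation of \cite{moi6,mcop}. Concretely: the symplectic curve $C$ in $\P^2$ has positive self-intersection on each component, so by the work of McDuff–Opshtein one can inflate along $C$ and, more to the point, one can push $C$ through regions of controlled area. The area hypothesis — each component $C_j$ has area strictly exceeding the sum of the relevant axis-lengths of the ellipsoids assigned to it — is exactly what is needed to guarantee, component by component, that there is enough "symplectic room" along $C_j$ to route it through each assigned ellipsoid as a disc of the prescribed area, and through each assigned self-intersection point of $E''$-type along the two prescribed branches, while keeping $C_j$ disjoint from the $E_i^{(3)}$ (which have nothing assigned to them, so we simply keep $C$ away from them throughout). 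I would first treat the case $k'=k''=k^{(3)}=0$ of a single family $E(a_i,b_i)$, then note that the $z=0$ versus $w=0$ asymmetry for the $E_i'$ is just a relabelling of the two factors, the $E_i''$ case is handled by doing the $E_i$-move along one branch and the $E_i'$-move along the other branch at the chosen double point, and disjointness from $E_i^{(3)}$ is automatic.

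The main technical step — and where the real work lies — is establishing the "enough room" claim: that the strict area inequality lets one perform a Hamiltonian isotopy of $\P^2$ carrying $C$ into the desired normal form without creating intersections with the other ellipsoids of the packing. This is where I expect to invoke, in an essential way, the singular inflation technique of \cite{moi6,mcop}: one inflates along the image of $C$ to create a sufficiently large tubular neighbourhood, uses the positivity of self-intersections to control the resulting symplectic form, and then applies a parametrised Moser/isotopy-extension argument to slide the pieces of $C$ into the coordinate discs of the $\phi(E_i)$ one at a time, each step consuming an amount of area along $C_j$ equal to the corresponding $a_i$ (respectively $b_q'$, $a_r''$, $b_s''$), which the hypothesis guarantees is available with strict inequality to spare. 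The fact that $\phi$ is the restriction of an embedding of the \emph{closed} ellipsoids is used here so that the coordinate discs have closed neighbourhoods on which Weinstein's theorem applies cleanly. Once this is done, the listed identities $E_i\cap C'=\phi(\D(a_i)\times\{0\})$ etc. hold by construction, and the lemma follows.
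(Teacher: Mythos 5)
There is a genuine gap: the step you yourself identify as ``the main technical step --- and where the real work lies'' is precisely the content of the lemma, and your proposal does not supply an argument for it. Saying that one inflates along $C$ to create a large tubular neighbourhood and then applies ``a parametrised Moser/isotopy-extension argument to slide the pieces of $C$ into the coordinate discs'' does not explain how the area hypothesis is actually consumed, nor why such a Hamiltonian isotopy exists while avoiding the other ellipsoids of the packing; moving a symplectic curve through a nearly full packing one ellipsoid at a time is exactly the kind of direct geometric manoeuvre that has no obvious implementation, and inflation along $C$ itself (which only enlarges areas of classes meeting $C$) is not the relevant tool.

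The paper's proof avoids isotoping $C$ altogether. The key reduction, absent from your proposal, is that ellipsoid packings of $\P^2$ are unique up to isotopy (by \cite{mcop}), so it suffices to construct \emph{some} packing by the same ellipsoids that is already directed by $C$. One starts from a directed packing of tiny scaled copies $\tau E_i\sqcup\tau E_i'\sqcup\tau E_i''\sqcup\tau E_i^{(3)}$, which exists by Darboux's theorem at the assigned components and double points; one blows up these small ellipsoids following McDuff's procedure, obtaining a singular set $\ct=\cs\cup\hat C$ containing the strict transform $\hat C$; one then inflates not along $C$ but along an auxiliary connected embedded curve $Q$ in the class $A=N\bigl(L-(1+\delta)\sum k_{i,j}[F_{i,j}]-\cdots\bigr)$, whose existence is guaranteed by the McDuff--Opshtein singular inflation theorem (Corollary 1.2.17 of \cite{mcop}) provided $A$ meets every component of $\ct$ non-negatively. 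The area hypothesis of the lemma enters at exactly this point: it is equivalent to $A\cdot\hat C>0$, so $Q$ can be taken to meet $\hat C$ positively and transversally, the inflation keeps $\hat C$ symplectic, and blowing back down yields full-size ellipsoids in directed position. Without this reduction and the auxiliary curve $Q$, your outline has no mechanism converting the area inequality into an isotopy.
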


We prove this lemma in section \ref{secondlemma}, after recalling some results of McDuff relating ellipsoid and ball packings in section \ref{background}.

\begin{rk}
Although this lemma definitely uses the fact that $C$ is $J$-holomorphic for an $\om$-compatible $J$, it does not seem to be a purely pseudo-holomorphic statement. The lemma applies in cases when our $J$-curves have very negative index, and can therefore be found only in very large-dimensional families  of almost complex structure. This makes
the task of finding isotopies of these curves  {\it via} pseudo-holomorphic methods rather hopeless.
The proof relies instead on the singular inflation technique, and what might be surprising is that it shows that this statement belongs to the {\it soft} side of symplectic geometry.
\end{rk}

Given these preliminaries, Theorem \ref{thm:irrstabclosed} is put together in section \ref{mainthm}. In section \ref{mainone} we describe the  decomposition of (irrational) symplectic manifolds into a finite number of ellipsoids and pseudo-balls. Applying the packing stability for these open manifolds, we complete the proof of Theorem \ref{thm:irrstabclosed} in section \ref{maintwo}.

\section{Packing stability of rational surfaces}\label{firstlemma}
The aim of this section is to explain lemma \ref{le:ratstabpack}. We recall the following result by Biran \cite{biran5}.
Let $(\hat\P^2_p,\hat \om)$ be a symplectic blow-up of $\P^2$, and call $\Om:=[\hat \om]$ the cohomology class of $\hat \om$. We normalize so that lines in $\P^2$ have area $1$. Define
$$
d_\Om:= \inf\left\{ \frac{\Om(B)}{c_1(B)},\; \text{ for } B^2\geq 0,\; \Om(B)>0,\; c_1(B)\geq 2 \right\}.
$$
\begin{thm*}[Biran]  The quantity $d_\Om$ bounds $\Lambda(\hat \P^2_p,\hat \om)$ from below.
\end{thm*}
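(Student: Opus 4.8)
The plan is to combine the known cohomological description of symplectic ball packings of rational surfaces with a short estimate on exceptional classes. Recall (McDuff, together with McDuff--Polterovich and Li--Liu) that $\sqcup_{i=1}^N B(\lambda_i)$ embeds symplectically into $(\hat\P^2_p,\hat\om)$ exactly when, on the iterated blow-up $\hat\P^2_{p+N}$ with the new exceptional classes $e_1,\dots,e_N$, the class $\Om-\sum_i\lambda_i e_i$ has non-negative square and pairs positively with every exceptional class $E$ (i.e. $E^2=-1$, $c_1(E)=1$, represented by an embedded symplectic sphere). The condition $(\Om-\sum_i\lambda_i e_i)^2\ge 0$ is precisely $\tfrac12\sum_i\lambda_i^2\le\mathrm{Vol}(\hat\P^2_p,\hat\om)$, and conversely any embedding forces this. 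So, fixing $\lambda\le d_\Om$ and a collection $(\lambda_i)$ with $\lambda_i<\lambda$ satisfying the volume bound, the whole statement reduces to verifying $(\Om-\sum_i\lambda_i e_i)\cdot E>0$ for every exceptional class $E$ of $\hat\P^2_{p+N}$. (Because we want embeddings of \emph{open} balls under a non-strict volume bound, one applies this at the end to all $\lambda_i'<\lambda_i$: then the volume bound becomes strict, the square becomes strictly positive, and the resulting embeddings of the closed sub-balls $\bar B(\lambda_i')$ assemble to an embedding of $\sqcup B(\lambda_i)$.)

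Next I would unwind what such an $E$ looks like. Using the orthogonal splitting $H^2(\hat\P^2_{p+N})=H^2(\hat\P^2_p)\oplus\langle e_1,\dots,e_N\rangle$, write $E=B-\sum_i n_i e_i$ with $B\in H^2(\hat\P^2_p)$. Positivity of intersections gives $n_i=E\cdot e_i\ge 0$, and since $E$ is effective its coefficient $d$ of the line class $L$ is $\ge 0$. From $E^2=-1$ and $c_1(E)=1$, using $e_i^2=-1$ and $c_1(e_i)=1$, one reads off
$$
B^2=\sum_i n_i^2-1,\qquad c_1(B)=1+\sum_i n_i.
$$
Two cases are immediate. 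If $d=0$ then $B^2\le 0$ with equality only when $B=0$, which forces $E=e_i$ for some $i$ (or a pulled-back exceptional divisor $f_j$ of one of the original $p$ blow-ups); in that case $(\Om-\sum_k\lambda_k e_k)\cdot E$ equals $\lambda_i$ (resp. $\Om(f_j)$), which is positive. If instead $\sum_i n_i=0$ then $E=B$ is pulled back from $\hat\P^2_p$, and $(\Om-\sum_k\lambda_k e_k)\cdot E=\Om(B)>0$ because $\hat\om$ is a symplectic form and $B$ an exceptional class of $\hat\P^2_p$.

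The heart of the matter is the remaining case $s:=\sum_i n_i\ge 1$ with $d\ge 1$. Since the $n_i$ are non-negative integers, $\sum_i n_i^2\ge\sum_i n_i=s$, so $B^2\ge s-1\ge 0$; also $c_1(B)=1+s\ge 2$. Moreover $B$ has $B^2\ge 0$ and $B\cdot L=d\ge 1>0$, so it lies in the closed forward cone, and since $\Om$ is interior to that cone we get $\Om(B)>0$. Hence $B$ is an admissible competitor in the infimum defining $d_\Om$, and
$$
\Om(B)\ \ge\ d_\Om\,c_1(B)\ =\ d_\Om(1+s)\ \ge\ \lambda(1+s)\ >\ \lambda s\ \ge\ \sum_i n_i\lambda_i ,
$$
using $\lambda_i<\lambda$ in the last step and $\lambda>0$ in the one before it. Therefore $(\Om-\sum_i\lambda_i e_i)\cdot E=\Om(B)-\sum_i n_i\lambda_i>0$, as required; running over all $E$ completes the argument.

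The combinatorial estimate above is elementary once the framework is in place, so the step I expect to be the real obstacle is the input itself: the cohomological criterion for ball packings of rational $4$-manifolds (which ultimately rests on the inflation technique and gauge-theoretic results), together with the positivity facts $n_i\ge 0$, $d\ge 0$ and $\Om(B)>0$ on the forward cone that let the exceptional classes be handled uniformly. There is also some bookkeeping to pass between the open-ball, non-strict-volume statement of $\cp(\lambda)$ and the closed-ball, strict-volume statement that McDuff's theorem provides, but this is exactly the limiting remark indicated at the end of the first paragraph.
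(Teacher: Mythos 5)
The paper does not actually prove this statement: it is quoted from Biran \cite{biran5}, and the new content of Section~\ref{firstlemma} is only the subsequent Lemma~\ref{le:dom}, which bounds $d_\Om$ from below in terms of the volume. So there is no in-paper argument to compare yours against; what you have written is essentially a reconstruction of Biran's original proof, and it is correct in outline. The route --- reduce the packing problem to checking that $\Om-\sum_i\lambda_i e_i$ has non-negative square and positive pairing with every exceptional class of the further blow-up (McDuff--Polterovich together with the Li--Liu/McDuff description of the symplectic cone of a rational surface), then split the exceptional classes into those supported on the old data, where positivity is automatic, and those genuinely involving the new $e_i$, where $B^2=\sum_i n_i^2-1\ge 0$, $c_1(B)=1+\sum_i n_i\ge 2$ and $\Om(B)>0$ by the light-cone argument, so that $\Om(B)\ge d_\Om\,c_1(B)=d_\Om(1+\sum_i n_i)$ beats $\sum_i n_i\lambda_i<\lambda\sum_i n_i$ --- is exactly Biran's, and your arithmetic in the main case is right.

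Two steps deserve more care than you give them, though neither is a fatal gap. First, passing from closed sub-balls $\bar B(\lambda_i')$ with $\lambda_i'<\lambda_i$ to an embedding of the open balls $\sqcup B(\lambda_i)$ at exactly full volume does not follow by merely letting $\lambda_i'\to\lambda_i$: the embeddings obtained for different $\lambda'$ need not be nested, and one must invoke the uniqueness up to isotopy of ball packings to arrange a nested exhausting sequence --- this is precisely the device the paper uses at the end of Section~\ref{section33}, and it is the ``very full filling'' issue discussed in \cite{lamcsc}. Second, in your case $\sum_i n_i=0$ the positivity $\Om(B)>0$ for an exceptional class $B$ of $\hat\P^2_p$ is a theorem (exceptional classes of a symplectic rational surface are represented by symplectic spheres, via Taubes--Seiberg--Witten and Li--Liu), not a formality; it should be flagged as an external input on the same footing as the cohomological packing criterion itself.
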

Of course, this theorem is interesting only when $d_\Om>0$. Lemma \ref{le:ratstabpack} will therefore follow if we can bound $d_\Om$ from below by a quantity which depends only on the volume of $(\hat \P^2_p,\hat \om)$. This is precisely the content of the following lemma.
\begin{lemma} \label{le:dom} Let $\hat \om$ be a symplectic form on $\hat \P^2_p$ obtained by blowing-up $p$ balls of sizes $(\lambda_1,\dots,\lambda_p)$.  Denoting by $\kappa:=\sqrt{\sum \lambda_i^2}$, we have
$$
d_\Om\geq \frac{1-\kappa}{3+\sqrt p} = \frac{1-\sqrt{1-2\mathrm{Vol}(\hat\P^2_p)}}{3+\sqrt p}.
$$
\end{lemma}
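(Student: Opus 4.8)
The plan is to rewrite the three conditions appearing in the definition of $d_\Om$ in the standard homology basis of $\hat\P^2_p$, and then to reduce the whole estimate to two applications of the Cauchy--Schwarz inequality.

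First I would fix the basis $L,E_1,\dots,E_p$ of $H_2(\hat\P^2_p;\Z)$ with $L^2=1$, $E_i^2=-1$ and all other intersection numbers zero. With the normalisation that a line has area $1$ and that the $i$-th exceptional sphere has area $\lambda_i$, one has (identifying a class with its Poincar\'e dual) $\Om = L-\sum_i\lambda_i E_i$ and $c_1 = 3L-\sum_i E_i$. In particular $2\,\mathrm{Vol}(\hat\P^2_p)=\Om^2 = 1-\sum_i\lambda_i^2 = 1-\kappa^2$, which gives the identity $\kappa=\sqrt{1-2\,\mathrm{Vol}(\hat\P^2_p)}$, hence the second equality in the statement; note also that, since $\hat\om$ exists, $\kappa<1$. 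Writing an arbitrary class as $B=dL-\sum_i m_i E_i$, the conditions $B^2\ge0$, $\Om(B)>0$, $c_1(B)\ge 2$ translate into
$$
\sum_i m_i^2 \le d^2,\qquad \sum_i m_i\lambda_i < d,\qquad 3d-\sum_i m_i \ge 2 .
$$

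The one point requiring a short argument is that $d\ge 1$. If $d=0$ the first inequality forces $B=0$, contradicting $c_1(B)\ge 2$. If $d<0$, then $\sqrt{\sum_i m_i^2}\le|d|$, so Cauchy--Schwarz gives $\sum_i m_i\lambda_i \ge -\kappa\sqrt{\sum_i m_i^2}\ge -\kappa|d| = \kappa d \ge d$ (the last step because $d<0$ and $\kappa\le1$), contradicting the second inequality. Hence $d\ge1$, and in particular the denominator $c_1(B)$ is positive. This sign bookkeeping is really the only delicate step: it is what makes the two Cauchy--Schwarz estimates below point in the directions needed and guarantees that dividing one by the other preserves the inequality.

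It then remains to bound numerator and denominator. Since $d>0$, Cauchy--Schwarz gives $\sum_i m_i\lambda_i \le \sqrt{\sum_i m_i^2}\,\sqrt{\sum_i\lambda_i^2} = \kappa\sqrt{\sum_i m_i^2}\le \kappa d$, so the numerator satisfies $\Om(B)=d-\sum_i m_i\lambda_i \ge d(1-\kappa)$; and $\sum_i m_i \ge -\sqrt{p}\,\sqrt{\sum_i m_i^2}\ge -\sqrt p\,d$, so the denominator satisfies $c_1(B)=3d-\sum_i m_i \le d(3+\sqrt p)$. As both quantities are positive and $d>0$, dividing yields $\Om(B)/c_1(B)\ge(1-\kappa)/(3+\sqrt p)$; taking the infimum over all admissible $B$ gives $d_\Om\ge(1-\kappa)/(3+\sqrt p)$, which together with $\kappa=\sqrt{1-2\,\mathrm{Vol}(\hat\P^2_p)}$ is exactly the claim.
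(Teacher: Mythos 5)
Your proof is correct and follows essentially the same route as the paper's: translate the three conditions into the basis $L,E_1,\dots,E_p$, use $B^2\ge 0$ to get $\sum m_i^2\le d^2$, and apply Cauchy--Schwarz twice (once with the $\lambda_i$ to bound $\Om(B)$ from below, once with the all-ones vector to bound $c_1(B)$ from above), with a short sign argument to see that the coefficient of $L$ is positive. Your explicit treatment of the cases $d=0$ and $d<0$, and of the identity $\kappa=\sqrt{1-2\,\mathrm{Vol}}$, just spells out details the paper leaves implicit.
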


Note that to be able to blow-up balls of these sizes we automatically have $\kappa <1$.

\noindent{\it Proof:} Let $B\in H_2(\hat \P^2_p)$, decomposed in the following way:
$$
B=kL-\sum m_iE_i,
$$
where $L,E_1,\dots,E_p$ is the standard basis of $H_2(\hat \P^2_p)$, and $k,m_i$ are integers.

The condition $B^2\geq 0$ implies that $k^2-\sum m_i^2\geq 0$, hence
\begin{equation}\label{eq:Bsq}
\sum m_i^2\leq k^2.
\end{equation}

Hence by Cauchy-Schwartz inequality, for a class with $B^2 \ge 0$ we have
$$
\left|\sum m_i\lambda_i\right|\leq \sqrt{\sum m_i^2}\sqrt{\sum \lambda_i^2} =  \kappa\sqrt{\sum m_i^2} < \sqrt{\sum m_i^2} \leq |k|.
$$

Now $\Om(B)=k-\sum m_i\lambda_i$.
We therefore see that under the conditions $B^2>0$ and $\Om(B)>0$ we have $\Om(B)\leq k+|k|$ and hence $k>0$ and, by the same Cauchy-Schwarz inequality
$$\Om(B)>k(1-\kappa).$$

Finally, for a class with $B^2\geq 0$ and $\Om(B)>0$,
$$
c_1(B)=3k-\sum m_i\leq 3k+\sqrt p\sqrt{\sum m_i^2}\leq 3k+\sqrt p k.
$$
Putting these inequalities together, we get:
$$
d_\Om\geq \frac{(1-\kappa)k}{(3+\sqrt p)k}=\frac{1-\kappa}{3+\sqrt p}.
$$
\cqfd

\section{Packing stability of ellipsoids}
\subsection{Background results on ellipsoid embeddings}\label{background}
We recall two facts on embedding of ellipsoids. The first one, stated in \cite{mcduff4}, relates ellipsoid embeddings to ball packings.

\begin{thm}[McDuff, \cite{mcduff4}]\label{mcduffthm} For each $a\in\Q^+$, there is an in integer $p(a)$ and weights $\bw(a):=(w_1(a),\dots,w_{p(a)}(a))$, such that for all rational symplectic $4$-manifolds $(X,\om)$, $\tau E(1,a)\omhra X$ if and only if $\sqcup \tau B(w_i)\omhra X$.
\end{thm}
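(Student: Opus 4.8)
The plan is to follow McDuff's argument, which splits the equivalence into two halves joined by a purely combinatorial device, the \emph{weight expansion}. For rational $a\ge 1$ one defines $\bw(a)$ by the subtractive Euclidean recursion: $\bw(1)=(1)$; $\bw(a)=(1,\dots,1)$ with $a$ ones when $a\in\N$; $\bw(a)=\big(1,\bw(a-1)\big)$ when $a>1$ is non-integral; and $\bw(a)=a\cdot\bw(1/a)$ when $0<a<1$. Writing $a=p/q$ in lowest terms, this is the Euclidean algorithm on $(p,q)$, so $\bw(a)$ is finite; its length is $p(a)$, and one checks $\sum_i w_i(a)^2=a=2\,\mathrm{Vol}\,E(1,a)$, so that the packing $\sqcup_i B(w_i(a))$ has the volume of $E(1,a)$. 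The essential feature is that $(p(a),\bw(a))$ depends on $a$ alone, not on $X$.

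For the \emph{forward} implication one uses the classical fact (Traynor for $a\in\N$, McDuff--Schlenk in general) that the open ellipsoid is fully filled by its weight packing: $\sqcup_i B(w_i(a))\omhra E(1,a)$, with the entire volume used up. I would prove this by induction on $p(a)$ via the recursion above: when $a>1$ the largest round ball inside $E(1,a)$ is $B(1)$, and $B(1)\sqcup E(1,a-1)$ fully fills $E(1,a)$ (for $a\in\N$ this is the familiar statement that $E(1,n)$ is fully filled by $n$ copies of $B(1)$, and in general it is an explicit toric/folding construction); when $a<1$ one rescales a filling of $E(1,1/a)$ by $a$; the base case is $E(1,1)=B(1)$. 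Granting this, if $\tau E(1,a)\omhra X$ then precomposition at once gives $\sqcup_i\tau B(w_i(a))\omhra\tau E(1,a)\omhra X$.

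The \emph{reverse} implication is the real content, and here rationality of $X$ is used. Assume $\sqcup_i\tau B(w_i(a))\omhra X$. Blowing up along these balls, the McDuff--Polterovich correspondence furnishes a symplectic form on $\tilde X:=X\#p(a)\,\overline{\P}^2$ in the class $\Om_{\mathrm{ball}}:=[\om]-\tau\sum_i w_i(a)\,e_i$ ($e_i$ the exceptional classes; up to the usual $\eps$ and open-versus-closed-ball adjustments). Dually, $\tau E(1,a)\omhra X$ is equivalent to $\tilde X$ carrying a symplectic form in the class $\Om_{\mathrm{ell}}$ obtained by blowing up the \emph{resolution chain} of $E(1,a)$, the string of exceptional spheres whose homology classes and negative self-intersections are prescribed by the Hirzebruch--Jung continued fraction of $a$. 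The weight expansion is designed exactly so that $\Om_{\mathrm{ball}}$ and $\Om_{\mathrm{ell}}$ are interchanged by an orientation-preserving self-diffeomorphism of $\tilde X$ assembled from an explicit sequence of blow-downs carried out in a different order (a Cremona-type move); each individual blow-down is legitimate because Taubes--Seiberg--Witten theory and the known shape of the symplectic cone of a rational surface (Li--Liu, McDuff) produce the required embedded symplectic exceptional sphere and allow one to inflate the form so as to keep it symplectic throughout. Transporting the form back across this diffeomorphism puts a symplectic form on $\tilde X$ in the class $\Om_{\mathrm{ell}}$, hence $\tau E(1,a)\omhra X$.

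The weight bookkeeping and the forward direction are essentially formal; the main obstacle is the reverse direction, and within it the two delicate ingredients just named: one needs the normal-form and standard-neighbourhood theorems for configurations of embedded symplectic spheres, so that blowing the resolution chain down is a genuine symplectic operation producing $X$ with an embedded copy of $\tau E(1,a)$ removed, and one needs the inflation and connectivity results for the symplectic cone of rational surfaces to ensure that once the cohomology class is right an actual symplectic form exists. Assembling these is precisely the work carried out in \cite{mcduff4}.
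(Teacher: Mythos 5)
First, a point of comparison: the paper does not prove this statement. It is quoted from \cite{mcduff4}, and section \ref{background} only recalls the blow-up construction behind it because the later arguments need the mechanism itself (the chain of spheres $S_j$), not just the equivalence. So your sketch can only be measured against McDuff's argument as the paper summarizes it.

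Your overall architecture is right: the Euclidean-algorithm weight expansion with $\sum_i w_i(a)^2=a$, the forward direction via the (true) full filling $\sqcup B(w_i(a))\omhra E(1,a)$, and a reverse direction running through the McDuff--Polterovich blow-up correspondence, Taubes--Seiberg--Witten theory and inflation on rational surfaces. The genuine gap is the central mechanism you propose for the reverse direction. There is no Cremona-type self-diffeomorphism interchanging two classes $\Om_{\mathrm{ball}}$ and $\Om_{\mathrm{ell}}$: as recalled in section \ref{background}, McDuff blows up a neighborhood of $E(1,a)$ by an iterated, \emph{nested} sequence of balls of capacities $k_i=aw_i+\delta_i$, so the exceptional classes produced are the standard $e_i$ and the class of the resulting form is $[\om]-\sum_i k_ie_i$ --- the same class, up to the $\delta$'s, as the one coming from the disjoint ball packing. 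A homologically nontrivial ``blow-down in a different order'' would destroy this bookkeeping rather than implement it. What actually has to be supplied is: (i) starting from the form on the blow-up given by the ball packing, produce the chain of embedded symplectic spheres $S_1,\dots,S_p$ in the prescribed classes (explicit combinations $e_j-e_{j+1}-\dots$) with $\om$-orthogonal intersections; this uses positivity of intersections and Seiberg--Witten theory on rational surfaces, together with the uniqueness (up to isotopy/deformation) of symplectic forms in a given class on such manifolds (Li--Liu, McDuff), and this is where the hypothesis on $X$ enters; and (ii) the normal-form statement that a neighborhood of such a chain with the correct areas can be excised and replaced by a neighborhood of the boundary of an arbitrarily small domain containing $\tau E(1,a)$, i.e.\ the blow-up procedure can be reversed. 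You gesture at (ii) in your final paragraph, but (i) is replaced in your write-up by the nonexistent diffeomorphism, and without it the argument does not close.
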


The second one follows readily from the proof of the previous theorem, in particular the construction of the $w_i(a)$.


\begin{lemma}\label{le:stabw}
Fix $a\in \R$. There exists $\eps_0$ (that depends on $a$), such that for all $\eps<\eps_0$ and $a'$ with $|a'-a|<\eps$, $p'(a)>p(a)$ and $\bw(a')$ verifies the following:
$$
\begin{array}{ll}
|w_i(a')-w_i(a)| < \eps \hspace*{2cm} & \forall i\leq p,\\
|w_i(a')|<\eps & \forall i>p.
 \end{array}
$$
\end{lemma}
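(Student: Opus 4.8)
\noindent\textit{Proof plan.} Recall from the proof of Theorem~\ref{mcduffthm} that $\bw(a)$ is the \emph{weight expansion} of $a$. Writing $\bw(u,v)$ for the non-increasing list of side-lengths produced by decomposing a $u\times v$ rectangle ($u\ge v>0$) greedily into squares --- remove $\lfloor u/v\rfloor$ squares of side $v$, then recurse on the leftover $v\times(u-\lfloor u/v\rfloor v)$ rectangle --- one has $\bw(a)=\bw(a,1)$ and $p(a)=\#\bw(a)$. Equivalently, $\bw$ satisfies the self-similar recursion
$$
\bw(m)=(1^{\times m})\ \ (m\in\N),\qquad \bw(a)=\bigl(1^{\times\lfloor a\rfloor},\,\{a\}\cdot\bw(1/\{a\})\bigr)\ \ \text{if }\{a\}:=a-\lfloor a\rfloor\neq0,
$$
where $\lambda\cdot(x_1,\dots,x_n):=(\lambda x_1,\dots,\lambda x_n)$ and commas mean concatenation; since $a\in\Q$ this recursion terminates at an integer, so $p(a)<\infty$. (We take $a\ge1$, the case $a<1$ being entirely analogous.) The whole content of the lemma is that $a\mapsto\bw(a)$ is continuous at the rational point $a$, in the precise form I will prove by induction on $p(a)$: \emph{for every $\eta>0$ there is $\delta>0$ such that every real $a'$ with $|a'-a|<\delta$ satisfies $p(a')\ge p(a)$, with $p(a')>p(a)$ unless $a'=a$, and $|w_i(a')-w_i(a)|<\eta$ for $i\le p(a)$, $w_i(a')<\eta$ for $i>p(a)$.} The quantitative bounds stated in the lemma then follow by keeping track of the ($a$-dependent, bounded) proportionality constants that appear below.

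The base case --- and the crux of the whole argument --- is the \emph{integer case} $a=m\in\N$, where a priori the statement looks false because $p$ is discontinuous there. If $m\le a'<m+1$ then $\bw(a')=\bigl(1^{\times m},\bw(1,a'-m)\bigr)$, and every entry of the tail $\bw(1,a'-m)$ is $\le a'-m$: the first $m$ weights are literally unchanged and the new ones are tiny. If $m-1<a'<m$, set $\eta':=m-a'$; then $a'-(m-1)=1-\eta'$, so $\bw(a')=\bigl(1^{\times(m-1)},\bw(1,1-\eta')\bigr)$ and $\bw(1,1-\eta')=\bigl(1-\eta',\,\bw(1-\eta',\eta')\bigr)$ with every entry of $\bw(1-\eta',\eta')$ at most $\eta'$. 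Hence $\bw(a')=\bigl(1^{\times(m-1)},\,1-\eta',\,(\text{entries}\le\eta')\bigr)$. Although the length jumps from $m$ to at least $m+1$, the one genuinely new \emph{large} weight, occupying slot $m$, is $1-\eta'$, a perturbation of size $|a'-a|$ of the unit weight it replaces; all further weights have size $O(|a'-a|)$. This settles the integer case (and also gives $p(a')>m$ whenever $a'\neq m$).

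For the inductive step with $a$ \emph{non-integral}, note that $\{a\}\in(0,1)$, so for $|a'-a|<\delta<\min(\{a\},1-\{a\})$ we still have $\lfloor a'\rfloor=\lfloor a\rfloor=:\ell\ge1$, $\{a'\}\in(0,1)$, and $\bw(a')=\bigl(1^{\times\ell},\,\{a'\}\cdot\bw(b')\bigr)$ with $b':=1/\{a'\}$. The map $a'\mapsto b'$ is smooth near $a$ (since $\{a\}$ is bounded away from $0$) and carries $a$ to $b:=1/\{a\}\in\Q$ with $p(b)=p(a)-\ell<p(a)$. Applying the inductive hypothesis at $b$, for $a'$ near enough to $a$ the list $\bw(b')$ has its first $p(b)$ entries within any prescribed tolerance of those of $\bw(b)$, its remaining entries small, and more entries than $\bw(b)$ unless $b'=b$ (equivalently $a'=a$). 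Multiplying by $\{a'\}\to\{a\}<1$ and prepending $\ell$ ones, $\bw(a')$ is then $\eta$-close to $\bw(a)$ in its first $p(a)$ slots, has all further entries $<\eta$, and is strictly longer unless $a'=a$. This completes the induction, and hence the proof.

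The only real obstacle, as indicated, is the integer case: one has to see that, although the \emph{number} of weights blows up as $a'$ tends to $a$ through irrationals, the weight expansion is continuous entry-by-entry, because the first weight produced once the Euclidean algorithm for $a$ would have terminated has size close to $1$ in the appropriate rescaling --- so it merges with, rather than adds to, the old weights --- while everything produced after it lives in a rectangle of thickness $O(|a'-a|)$ and is correspondingly small. (Alternatively, all of this can be read off directly from McDuff's construction of the $w_i(a)$ in the proof of Theorem~\ref{mcduffthm}: each step of the Euclidean algorithm is stable under perturbing $a$ until the remainder shrinks to order $|a'-a|$, after which only weights of that order appear.)
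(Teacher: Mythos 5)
Your proof is correct in substance, and it fills in details the paper simply omits: the paper offers no argument for this lemma beyond the remark that it ``follows readily from... the construction of the $w_i(a)$'', which is exactly the weight-expansion recursion you unwind. Your induction on $p(a)$, with the integer case as base case, is the right way to organize it, and your treatment of the delicate point --- that at an integer $m$ the new weight in slot $m$ is $1-\eta'$ rather than a genuinely new large entry, while everything after it lives in a rectangle of thickness $O(|a'-a|)$ --- is exactly what makes the statement true despite the jump in $p$. One caveat: your closing remark that the stated bounds ``follow by keeping track of the ($a$-dependent, bounded) proportionality constants'' glosses over the fact that these constants exceed $1$. For instance, for $a=1+1/q$ one has $\bw(a)=(1,(1/q)^{\times q})$ while $\bw(a+t)=(1,(1/q+t)^{\times(q-1)},\,1/q-(q-1)t,\dots)$, so $|w_{q+1}(a')-w_{q+1}(a)|=(q-1)|a'-a|$; hence the literal formulation with the \emph{same} $\eps$ on both sides of the implication fails for $t$ close to $\eps$. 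This is really an imprecision in the paper's statement rather than a gap in your argument: the $\eta$--$\delta$ version you actually prove by induction (closeness for $i\le p$, smallness for $i>p$, for $a'$ sufficiently near $a$) is all that is used later, in the proof that $\Lambda(E(1,a))$ is locally bounded. You might simply state the lemma in that form.
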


We will need to apply not only Theorem \ref{mcduffthm}, but also the blowing up procedure in a neighborhood of an ellipsoid which it relies on. Therefore we briefly recall this procedure now, following \cite{mcduff4}, section $3$.

First of all, the number of steps in a continued fraction expansion gives us a function $p:\Q^+ \to \N$. Then McDuff shows that corresponding to each rational ellipsoid $E=E(a,b)$ and $p$-tuple $\delta \in \R^p$ of small (admissible) numbers, where $p=p(b/a)$, there is a sequence of symplectic spheres $S_1, \dots ,S_p$ which are $J$-holomorphic for a tame almost-complex structure on a suitable $p$-fold blow-up of $\C^2$. The complement of $\cup S_j$ in a small neighborhood $V$ of  $\cup S_j$ is symplectomorphic to a neighborhood of the boundary in a carefully chosen but arbitrarily small domain containing $E \subset \C^2$. In other words, an arbitrarily small neighborhood of $E$ can be replaced by a configuration of $J$-holomorphic curves, and conversely, by a version of Weinstein's Theorem, given any such configuration of curves where the components have the required areas and relative intersections, we can remove a small neighborhood and replace it with a neighborhood of our ellipsoid $E$.

The sequence of curves $S_j$ is constructed recursively starting with a neighborhood $U_0 \subset \C^2$ of $E$ which is defined in terms of the choice of $\delta$. The precise algorithm for the blowing-up can be found in \cite{mcduff4}, here we just outline briefly how the singular set arises.

{\it Step $1$.} At the first stage we blow-up a ball of size $k_1=aw_1(b/a)+\delta_1 = a + \delta_1$ inside $U_0$. The ball intersects the short axis, say $\{z_2=0\}$, of $E$ along a complex line and so the blow-up $U_1$ contains an exceptional divisor $E_1$ and the proper transform $C_1$ of $\{z_2=0\}$.

{\it Step $2$.} Now we blow-up a ball of size $k_2 = a w_2(b/a) + \delta_2$ inside $U_1$. This ball will intersect $E_1$ and $\{z_1=0\}$ along complex lines while avoiding $C_1$. Thus the blow-up $U_2$ will contain an exceptional divisor $E_2$, the proper transform of $E_1$, the proper transform $C_2$ of $\{z_1=0\}$ and still $C_1$.

{\it Step $i$.} Again we blow-up a ball of a certain size $k_i = a w_i(b/a)$ to produce a blow-up $U_i$ of $U_{i-1}$ with an exceptional divisor $E_i$. This ball will intersect exactly two of the curves we have considered previously, that is, proper transforms of $C_1$, $C_2$ or $E_j$ for $j<i$, and will intersect these along complex lines.

The construction is performed so that the volume of $U_p$ is of order $\delta$ and all points originally in the ellipsoid $E$ eventually lie in one of the balls to be blown-up. The curve $S_j$ is the proper transform of $E_j$. Thus its self-intersection is $-1$ minus the number of subsequent balls which intersect the transform of $E_j$. The construction is such that $S_p=E_p$ is the only curve with intersection $-1$ and the $S_j$ for $j<p$ all have area of order $\delta$.

The proper transforms of $C_1$ and $C_2$ will each intersect exactly one of the $S_j$, corresponding to the last ball which intersects the transform of the planes $\{z_2=0\}$ and $\{z_1=0\}$ respectively. When we reverse the construction we may assume that any given holomorphic curve which intersects this $S_j$ in a single point transversally is blown-down to a curve intersecting our neighborhood of $E$ in the corresponding axis.

\subsection{Ellipsoid embeddings in good positions relative to a given curve}\label{secondlemma}

In this section we give the proof of Lemma \ref{le:directed}, and follow that notation. First note that since we start with embeddings of closed ellipsoids, and since the area inequalities for components of $C$ are strict, the embeddings extend to disjoint rational ellipsoids which still satisfy the inequalities. Hence we may assume that all of our ellipsoids are rational. Moreover when choosing parameters $\delta$ as in section \ref{background} we will assume that the embeddings of disjoint closed ellipsoids extend to embeddings of disjoint neighborhoods of size $\delta$.
As in section \ref{firstlemma} we normalize so that lines in $\P^2$ have area $1$.



We need to show that $C$ is symplectically isotopic to a curve $C'$ in a special position with respect to the ellipsoids. Since in $\P^2$, ellipsoid packings are all isotopic, see \cite{mcop}, we can alternatively prove that  there exists a packing by the same ellipsoids which is in the desired position with respect to $C$ (we will say for short that our packing is {\it directed} by the curve $C$).

Our hypotheses on the existence of intersection points together with Darboux's theorem implies the existence at least of directed symplectic embeddings of the union $\sqcup \tau E_i \sqcup \tau E'_i \sqcup \tau E''_i \sqcup \tau E^{(3)}_i$ for $\tau$ sufficiently small.

We then blow-up the scaled ellipsoids (or rather the correct \nbd of the ellipsoids), as described in section \ref{background}. This provides a symplectic form $\hat \om_\tau$ on the $P$-fold blow-up $\hat \P^2_P$ of $\P^2$, where
$$
P=\sum_{i=1}^k p\big(\frac{b_i}{a_i}\big) + \sum_{i=1}^{k'} p\big(\frac{b'_i}{a'_i}\big)
+ \sum_{i=1}^{k''} p\big(\frac{b''_i}{a''_i}\big) + \sum_{i=1}^{k^{(3)}} p\big(\frac{b^{(3)}_i}{a^{(3)}_i}\big),
$$
and $p$ is the integer function that appears in Theorem \ref{mcduffthm} above. Write $p_i=p(\frac{b_i}{a_i})$, $p'_i=p(\frac{b'_i}{a'_i})$, $p''_i=p(\frac{b''_i}{a''_i})$ and $p^{(3)}_i=p(\frac{b^{(3)}_i}{a^{(3)}_i})$.

The symplectic form comes with a singular set $\cs$ (in the sense of \cite{mcop}): disjoint bunches of embedded negative spheres $\cs_i$, $\cs'_i$, $\cs''_i$ and $\cs^{(3)}_i$ corresponding to each ellipsoid as in section \ref{background}. We denote the spheres in $\cs_i$ by $S_{i,j}$, the spheres in $\cs'_i$ by $S'_{i,j}$ and so on. Also, we denote the exceptional classes corresponding to the $\tau E_i$ by $[F_{i,j}]$, that is, $[F_{i,j}]$ is the class of the exceptional divisor arising from the $j$th blow-up of a neighborhood of the embedded $\tau E_i$. Similarly define $[F'_{i,j}]$, $[F''_{i,j}]$ and $[F^{(3)}_{i,j}]$. Then with respect to the form $\hat \om_\tau$ we may assume that the class $[F_{i,j}]$ has area $\tau k_{i,j} = \tau(a_i w_i(b_i/a_i) + \delta_{i,j})$, $[F'_{i,j}]$ has area $\tau k'_{i,j}$ and so on for the other ellipsoids.


The blow-up $\hat \P^2_P$ also contains the strict transform of $C$, denoted $\hat C$. As $C$ intersects the ellipsoids along their axes (if at all) we recall again from the previous section that this implies $\hat C$ intersects transversally, $\om$-orthogonally, exactly one of the spheres $S_{i,j}$, exactly one of the spheres $S'_{i,j}$, two of the spheres $S''_{i,j}$ but avoids the $S^{(3)}_{i,j}$. As a consequence $\ct:=\cs\cup C$ is a singular set in the sense of \cite{mcop}.



Let $L$ denote the class of a line in $\P^2$, and also of the same line in $\hat \P^2_P$.

\begin{lemma}\label{le:directed-blow-up}
There exists a symplectic packing of $\sqcup E_i \sqcup E'_i \sqcup E''_i \sqcup E^{(3)}_i$, directed by $C$, if there exists $N\in \N$, $\delta>0$ and a connected embedded symplectic curve $Q$ with $Q^2>0$ in the class
$$
A=N(L-(1+\delta)\sum_{i,j} k_{i,j}[F_{i,j}] -(1+\delta)\sum_{i,j} k'_{i,j}[F'_{i,j}] $$ $$-(1+\delta)\sum_{i,j} k''_{i,j}[F''_{i,j}] -(1+\delta)\sum_{i,j} k^{(3)}_{i,j}[F^{(3)}_{i,j}]),
$$
which intersects $\ct$ positively and transversally.
\end{lemma}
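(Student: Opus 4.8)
The plan is to run the singular inflation machinery of \cite{mcop} on the singular set $\ct = \cs \cup C$, using the curve $Q$ as the inflating class, and then to undo the ellipsoid blow-ups to read off a directed packing. First I would observe that the homology class $A$ is designed precisely so that, after inflating along $Q$ and rescaling, the exceptional classes $[F_{i,j}]$, $[F'_{i,j}]$, $[F''_{i,j}]$, $[F^{(3)}_{i,j}]$ recover areas of the form $\tau k_{i,j}$ (up to the $(1+\delta)$ cushion, which is absorbed by the freedom in $\delta$), while the line class $L$ keeps area $1$; this is exactly the cohomological normalization needed for McDuff's Theorem \ref{mcduffthm} to convert the ball blow-ups back into embedded ellipsoids of the right sizes. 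The point of having $Q$ be \emph{embedded} (not just immersed) with $Q^2 > 0$ and meeting $\ct$ positively and transversally is that it is then a legitimate curve to inflate along in the sense of \cite{moi6,mcop}: singular inflation along such a $Q$ produces, for each $t \ge 0$, a symplectic form $\hat\om_t$ in the class $[\hat\om_\tau] + t\,\mathrm{PD}[Q]$ that still tames the singular set $\ct$, i.e. keeps all the spheres of $\cs$ and the strict transform $\hat C$ symplectic and keeps their intersection pattern.

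Next I would push $t$ to infinity (or to the appropriate finite value), so that the class of the inflated form becomes dominated by $\mathrm{PD}[Q] = \mathrm{PD}[A]/N$; after rescaling by $\frac{1}{t N}$ and taking $t$ large the resulting class is, up to an arbitrarily small error controlled by the original $[\hat\om_\tau]$ and by $\delta$, equal to
$$
L - \sum_{i,j} k_{i,j}[F_{i,j}] - \sum_{i,j} k'_{i,j}[F'_{i,j}] - \sum_{i,j} k''_{i,j}[F''_{i,j}] - \sum_{i,j} k^{(3)}_{i,j}[F^{(3)}_{i,j}].
$$
This is the class of $\P^2$ blown up along the weight sequences $\bw(b_i/a_i)$ etc. for the \emph{unscaled} ellipsoids, and the singular set $\ct$ survives the whole deformation. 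Now I would blow down the spheres of $\cs$ in the standard way (the reverse of the construction recalled in section \ref{background}): blowing down $\cs_i$ replaces a neighborhood of that sphere bunch by a neighborhood of $E_i$, and because $\hat C$ met exactly one sphere $S_{i,j}$ transversally, $\om$-orthogonally, in each bunch $\cs_i$ and $\cs'_i$, met two spheres in each $\cs''_i$, and avoided each $\cs^{(3)}_i$, the image of $\hat C$ under blow-down meets the embedded $E_i$ exactly in the $\{w=0\}$ axis, meets $E'_i$ exactly in the $\{z=0\}$ axis, meets $E''_i$ in both axes, and misses $E^{(3)}_i$ — which is exactly the directedness condition. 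The resulting packing $\sqcup E_i \sqcup E'_i \sqcup E''_i \sqcup E^{(3)}_i \omhra \P^2$ directed by (the image of) $C$ is what we want; composing with the isotopy of ellipsoid packings from \cite{mcop} and tracking the effect on $C$ converts this, as explained just before the lemma, into the sought isotopy of $C$ itself.

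The main obstacle I anticipate is bookkeeping the error terms and the role of the parameter $\delta$: one has to be sure that the $(1+\delta)$ coefficients in $A$, together with the vanishingly small contribution of $[\hat\om_\tau]$ after rescaling by $t\to\infty$, combine to give exactly (or can be adjusted to give exactly, using that the ellipsoids are rational and the area inequalities strict) the target areas $k_{i,j}$ rather than something merely close to them — this is where the hypothesis that everything started from embeddings of \emph{closed} ellipsoids with \emph{strict} area inequalities gets used, since it gives room to perturb. A secondary technical point is to verify carefully that singular inflation in the sense of \cite{mcop} indeed applies to the singular set $\ct$: this requires knowing that $\ct$ is a singular set (established in the paragraph preceding the lemma) and that $Q$ intersects it positively and transversally (part of the hypothesis), so that the inflation preserves the symplectic structure near $\ct$ throughout; granting the results of \cite{moi6,mcop} this is essentially automatic, but it is the conceptual heart of why the statement is true and belongs, as the remark emphasizes, to the soft side of the theory.
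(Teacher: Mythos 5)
Your overall architecture is the same as the paper's: inflate along $Q$, observe that $\ct$ stays symplectic because $Q$ meets it positively and transversally, and then blow the singular set back down to read off the directed packing. However, the quantitative step in the middle is wrong as written, and it is exactly the point you flagged as your ``main obstacle.'' After inflating with fibers of area $r$ the form is Poincar\'e dual to $(1+rN)L-\sum(\tau+Nr(1+\delta))k_{i,j}[F_{i,j}]-\cdots$, so after dividing by $1+rN$ the coefficient of $k_{i,j}[F_{i,j}]$ is $\frac{\tau+Nr(1+\delta)}{1+rN}$, which tends to $1+\delta$ as $r\to\infty$, \emph{not} to $1$. The limit of the rescaled class is therefore $L-(1+\delta)\sum k_{i,j}[F_{i,j}]-\cdots$, which is off from the target by the multiplicative factor $1+\delta$ on every exceptional class; this is not ``an arbitrarily small error controlled by $\delta$,'' since $\delta$ is fixed by the hypothesis of the lemma and cannot be shrunk without re-finding $Q$. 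The correct move --- and the actual purpose of the $(1+\delta)$ cushion in the class $A$ --- is to stop at the finite value of $r$ solving $1+rN=\tau+rN(1+\delta)$, namely $r=\frac{1-\tau}{N\delta}$: at that point the normalized areas equal the target values $k_{i,j}$ \emph{exactly}, the line has area $1$, and blowing down produces the full-size ellipsoids. In other words, $\delta$ is not an error to be absorbed but the mechanism that lets a finite amount of inflation carry the scale factor from $\tau$ up to $1$; without it one would need infinite inflation, and with $r$ too large one overshoots. Once this is corrected (your parenthetical ``or to the appropriate finite value'' is the right instinct), the rest of your argument --- the survival of $\ct$ under inflation and the identification of the blown-down intersections of $\hat C$ with the ellipsoid axes --- matches the paper's proof.
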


\begin{proof}
As $Q^2 \ge 0$ we can inflate by adding a tubular neighborhood of $Q$ with fibers of any area $r>0$. The result will be a symplectic form Poincar\'{e} dual to $$
(1+rN)L-\sum_{i,j} (\tau + Nr(1+\delta))k_{i,j}[F_{i,j}] -\sum_{i,j} (\tau + Nr(1+\delta))k'_{i,j}[F'_{i,j}] $$ $$-\sum_{i,j} (\tau + Nr(1+\delta))k''_{i,j}[F''_{i,j}] -\sum_{i,j} (\tau + Nr(1+\delta))k^{(3)}_{i,j}[F^{(3)}_{i,j}].
$$

We choose $r$ such that $1+rN = \tau + rN(1+\delta)$ or equivalently $r=\frac{1-\tau}{N\delta}$. Then dividing the inflated form by $1+rN$, the singular set will have areas corresponding to ellipsoids of the correct sizes and $L$ will have area $1$. Furthermore $\ct$ remains symplectic with $\om$-orthogonal intersections by our assumption on its intersection with $Q$. Hence, blowing the singular set back down symplectically, as described in section \ref{background}, gives our directed symplectic packing as required.
\end{proof}

It remains to show that such a curve $Q$ exists. For this we use the following result from \cite{mcop}, Corollary 1.2.17.

\begin{thm*}[McDuff-Opshtein]\label{curveexists}
Suppose $P$ closed balls of sizes $p^j$ can be embedded in $\P^2$ and let $\hat \P^2$ denote the corresponding blow-up with exceptional divisors $F^j$. Let $\cs$ be a singular set in $\hat \P^2$. Then the class $$A=N(L- \sum_j p^j [F^j])$$ has a connected embedded symplectic representative $Q$ which intersects $\cs$ positively and transversally provided $A \cdot S^j \ge 0$ for all component curves $S^j$ of $\cs$.
\end{thm*}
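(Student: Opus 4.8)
The plan is to construct $Q$ in two stages: first produce \emph{some} embedded connected representative of $A$ using the structure theory of rational symplectic $4$-manifolds, and then use the singular inflation technique to clean up its intersections with the negative configuration $\cs$. The role of the hypothesis $A\cdot S^j\ge 0$ is to place $A$ in the cone where both stages can be carried out.

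First I would fix an $\om$-compatible almost complex structure $J$ for which every component $S^j$ of $\cs$ is $J$-holomorphic; this is possible because a singular set is a configuration of embedded $\om$-symplectic spheres meeting $\om$-orthogonally, and such a configuration can be made simultaneously holomorphic. Then I record the relevant intersection numbers. Since the $P$ balls of capacities $p^j$ embed in $\P^2$, the volume constraint gives $\sum_j (p^j)^2\le 1$, so $A^2=N^2(1-\sum_j (p^j)^2)\ge 0$ (and $>0$ away from the full-packing case); moreover $A\cdot\om>0$ and $A\cdot F^j=N p^j>0$, while $A\cdot S^j\ge 0$ is the hypothesis. Thus $A$ has nonnegative square and pairs positively with $\om$ and with all exceptional classes, and nonnegatively with every component of $\cs$. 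In this regime the Taubes--Gromov invariant $\mathrm{Gr}(A)$ is nonzero, because $\hat\P^2$ is a rational surface and, for classes of nonnegative square in the forward cone, this invariant is computed by Taubes' identity $SW=\mathrm{Gr}$ together with the wall-crossing and structure results of Li--Liu and McDuff. Hence for our $J$ there is a $J$-holomorphic subvariety in class $A$, and the adjunction formula together with McDuff's analysis of classes of nonnegative square upgrades it to an embedded connected representative $Z$: the virtual genus $g=1+\tfrac12(A^2+A\cdot K)$ is attained, and the positivity of the intersection numbers forces irreducibility for the $N$ we use.

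At this stage $Z$ is embedded, connected, and in the correct class, but its intersections with $\cs$ are controlled only up to positivity. Positive intersection is in fact automatic: $Z$ and each $S^j$ are $J$-holomorphic, so their local intersection indices are nonnegative, and the hypothesis $A\cdot S^j\ge 0$ is precisely what prevents any $S^j$ (which has $(S^j)^2<0$) from being forced as a component of $Z$. The remaining task is to promote ``positive'' to ``positive and transversal'' while keeping the representative embedded and connected in the class $A$. One cannot achieve this by a generic perturbation of $J$, because along $\cs$ the relevant curves have very negative index and so persist in no generic family; this is exactly where the singular inflation of \cite{mcop} is needed. The idea is to inflate along the components of $\cs$ (and, if necessary, along $Z$ itself), following the holomorphic configuration through the resulting family of cohomologous symplectic forms, so as to trade each tangency or higher-order contact with $\cs$ for ordinary transverse double points, all while remaining in class $A$. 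The output is the desired $Q$.

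The main obstacle is this last step. The difficulty is not the existence of a representative but the simultaneous control of its class, its genus, its embeddedness, and the positivity \emph{and} transversality of all its intersections with $\cs$, in a situation where standard pseudoholomorphic deformation theory fails because of negative index. Making the bookkeeping of the singular inflation work — verifying that each elementary inflation preserves all of these properties at once — is the technical heart of the argument, and is what makes the conclusion a soft, inflation-based statement rather than one following from a direct count of holomorphic curves.
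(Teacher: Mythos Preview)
The paper does not prove this theorem; it is quoted as Corollary~1.2.17 of \cite{mcop} and used as a black box. There is therefore no in-paper proof to compare your proposal against.

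On its own merits, your outline has the right shape but misplaces where the work lies. Stage~1 is more delicate than you indicate: nonvanishing of $\mathrm{Gr}(A)$ yields a $J$-holomorphic subvariety only for \emph{generic} $J$, whereas you need one for the specific $J$ making $\cs$ holomorphic. Passing to this $J$ requires a Gromov limit and an analysis of which components of $\cs$ can bubble off and with what multiplicities; this is precisely where the hypothesis $A\cdot S^j\ge 0$ is genuinely used, and that bubbling analysis (together with the smoothing of the nodal limit back into an embedded symplectic curve in class $A$) is the technical core of the argument in \cite{mcop}. You pass over this step.

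Your Stage~2 is misconceived. Singular inflation modifies the cohomology class of the symplectic form by thickening a neighborhood of a curve; it does not produce ``cohomologous'' forms, and it does not alter how two fixed curves intersect, so it cannot ``trade tangencies for transverse double points.'' Once you have a $J$-curve $Z$ sharing no component with $\cs$, positivity of intersections is automatic, and transversality with $\cs$ is obtained by an elementary local symplectic perturbation of $Z$ near each intersection point --- no inflation is involved. In the present paper singular inflation is used, for instance in the proof of Lemma~\ref{le:directed-blow-up}, for its actual purpose of deforming the symplectic class; it is not a device for straightening intersections.
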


Applying this to our class $A$, the intersection of $A$ with the $S_{i,j}$ is proportional to their area, and the intersection with $\hat C$ is positive precisely if our area hypotheses are fulfilled. Hence Theorem \ref{curveexists} produces a curve in our class. The curve has positive self-intersection because this self-intersection is the volume of the blow-up, following section \ref{background}, of a $\delta$ neighborhood of the original disjoint embedded ellipsoids.

This completes the proof of Lemma \ref{le:directed}. \cqfd


\subsection{Proof of theorem \ref{thm:irrstabell}}\label{section33}

By approximating a general ellipsoid by a rational ellipsoid, and using the uniqueness up to isotopy of ball packings of an ellipsoid,
see \cite{mcduff3,mcop, mcduff4}, we see that to establish the strong packing stability property it suffices to show the following.

\begin{lemma}
$\forall a\in \Q$, $E(1,a)$ has the strong packing stability.
\end{lemma}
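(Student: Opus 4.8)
The plan is to reduce strong packing stability of $E(1,a)$, for a fixed rational $a$, to packing stability of a blow-up of $\P^2$ via McDuff's ellipsoid-to-balls dictionary (Theorem \ref{mcduffthm}), together with Lemma \ref{le:directed} to handle the embeddings near the boundary at full volume. First I would observe that, by Theorem \ref{mcduffthm}, embedding a scaled union $\sqcup \tau B(\lambda_i)$ into $E(1,a)$ is, up to the volume obstruction bookkeeping, equivalent to embedding $\sqcup\tau B(\lambda_i)$ together with the ball package $\tau'\bw(a)$ associated to $E(1,a)$ into $\P^2$ (after rescaling so that a line has area $1$, the total ``room'' in $\P^2$ is $\tfrac12$, of which the $\bw(a)$-balls consume the volume of $E(1,a)$). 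So it suffices to show: there is a $\lambda_0 = \lambda_0(a) > 0$ such that whenever $\lambda_i < \lambda_0$ for all $i$ and the volume inequality $\tfrac12\sum\lambda_i^2 \le \mathrm{Vol}(E(1,a))$ holds, the union $\sqcup B(\lambda_i)$ embeds in $E(1,a)$.

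The main step is the following. Blow up $\P^2$ along the $\bw(a)$-package; by the discussion in section \ref{background} this produces $(\hat\P^2_p, \hat\om)$ with a singular set $\cs$ (the curves $S_j$) encoding $E(1,a)$, and a volume equal to $\mathrm{Vol}(E(1,a))$ (up to the arbitrarily small $\delta$-loss). Apply Lemma \ref{le:ratstabpack}: $(\hat\P^2_p,\hat\om)$ has strong packing stability with $\Lambda(\hat\om)$ bounded below by a quantity depending only on $\mathrm{Vol}(\hat\P^2_p)=\mathrm{Vol}(E(1,a))$ — call this lower bound $\lambda_0(a)$. Thus, for $\lambda_i<\lambda_0(a)$ and the volume inequality satisfied, $\sqcup B(\lambda_i)$ embeds into $\hat\P^2_p$. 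The delicate point is that a priori this embedding need not be disjoint from $\cs$, so we cannot simply blow down. This is exactly where Lemma \ref{le:directed} (in the form of Lemma \ref{le:directed-blow-up}) enters: composing with the blow-down map we get an embedding of $\sqcup B(\lambda_i)$ together with $E(1,a)$ into $\P^2$, where the balls are \emph{balls} (i.e.\ ellipsoids $E(\lambda_i,\lambda_i)$ assigned no component of a curve), so in the notation of Lemma \ref{le:directed} they fall in the fourth family $E^{(3)}$, with empty intersection requirement, while $C$ is taken to be a line (or the union of curves cut out by $\cs$); no area hypothesis is needed for the $E^{(3)}$-type. Lemma \ref{le:directed} then isotopes $C$ (equivalently the packing) into a position where $\sqcup B(\lambda_i)$ is disjoint from $C$, hence from $\cs$, so we may blow $\cs$ back down to recover $E(1,a)$ with $\sqcup B(\lambda_i)$ embedded inside it and disjoint from it. The converse direction (volume inequality being necessary) is trivial.

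The hard part is making the volume accounting exact so that the \emph{non-strict} inequality $\tfrac12\sum\lambda_i^2\le\mathrm{Vol}(E(1,a))$ is achieved, not merely the strict one. Since McDuff's correspondence and the blow-up construction both cost an arbitrarily small $\delta$, one gets for free only the version with a $\delta$-deficit; to close the gap at equal volume one uses that the balls $B(\lambda_i)$ are \emph{open} (as in the Remark after the Definition, and the ``very full fillings'' of \cite{lamcsc}) — an open packing at the critical volume is obtained as an increasing union of closed packings at volume $\mathrm{Vol}(E(1,a))-\delta$ with $\delta\to 0$, each of which is supplied by the argument above. One also has to check that $\lambda_0(a)$ from Lemma \ref{le:ratstabpack} survives this limiting process and the slightly smaller ellipsoid approximations; since the bound depends only on the volume, which is continuous, this is routine. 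Finally, to pass from rational $a$ to all $a$ (and to get the local boundedness of $\Lambda(E(1,a))$ asserted in Theorem \ref{thm:irrstabell}), one approximates a general ellipsoid by a rational one and invokes Lemma \ref{le:stabw} together with uniqueness up to isotopy of ellipsoid packings, so that the threshold $\lambda_0$ varies controllably — but that reduction is the content of the sentence preceding this lemma, so here it suffices to treat the rational case.
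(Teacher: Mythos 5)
Your overall strategy --- realize $E(1,a)$ as the complement of an ellipsoid in $\P^2$, blow up the weight balls of that ellipsoid, pack the blow-up using Lemma \ref{le:ratstabpack}, keep the new balls away from the exceptional curves, and blow back down --- is the same as the paper's, but there are two problems, one of bookkeeping and one substantive. The bookkeeping one: the weight package to blow up is that of the \emph{complementary} ellipsoid $E(a-1,a)\subset\P^2(a)$, whose closure has complement $E(1,a)$, not the package $\bw(a)$ of $E(1,a)$ itself. As written your volume accounting is inconsistent: in $\P^2$ normalized to have volume $\tfrac12$ the $\bw(a)$-balls cannot ``consume the volume of $E(1,a)$'' (which is $\tfrac a2>\tfrac12$), and if the blown-up balls did consume $\mathrm{Vol}(E(1,a))$ then the room left for the $B(\lambda_i)$ would be the volume of the complement, not of $E(1,a)$. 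With the correct package the blow-up does have volume $\mathrm{Vol}(\P^2(a))-\mathrm{Vol}(E(a-1,a))=\mathrm{Vol}(E(1,a))$, which is what Lemma \ref{le:ratstabpack} needs.

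The substantive gap is the step ``blow $\cs$ back down to recover $E(1,a)$ with $\sqcup B(\lambda_i)$ embedded inside it.'' Blowing down yields an embedding of $E(a-1,a)\sqcup B(\lambda_i)$ into $\P^2(a)$ with the balls in the complement of the ellipsoid, but that ellipsoid sits in $\P^2(a)$ in some a priori arbitrary way, and nothing in your argument identifies the complement of its closure with the standard open $E(1,a)$. This identification is precisely the point the paper singles out as the crux (see the Remark comparing the proof with McDuff's proof of the Hofer conjecture). It is handled by shrinking to $\tau E(a-1,a)$ for $\tau<1$, invoking Lemma \ref{le:directed} to direct the two axes of the shrunk ellipsoid along two toric lines $L_1,L_2$ while the balls avoid both lines, isotoping $\tau E(a-1,a)$ to the standard toric position rel $L_1\cup L_2$ (via \cite{moi5}, Corollary 2.2), and then pushing the balls into $E(1,a)$ with the contracting Liouville flow of $B^4(a)=\P^2(a)\setminus L_2$, losing only a factor $1-f(\tau)\to 1$; a final limiting argument using uniqueness of ball packings up to isotopy removes the loss. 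Your only invocation of Lemma \ref{le:directed} is to separate the balls from the exceptional divisors (which the paper notes can also be done by direct analysis); you never use it for positioning the complementary ellipsoid, which is where it is actually indispensable. Without this step, or McDuff's inner/outer approximation argument in its place, the proof does not close.
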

\noindent {\it Proof:} Assume without loss of generality that $a>1$, and consider the standard toric embedding $\phi_0$ of $E(a-1,a)$ into $\P^2(a)$, whose image has moment polytope drawn in Figure \ref{fig:twoell}. Here $\P^2(a)$ denotes complex projective space with lines scaled to have area $a$. Notice that the complement of the closure of this ellipsoid is exactly $E(1,a)$.

\begin{figure}[h!]
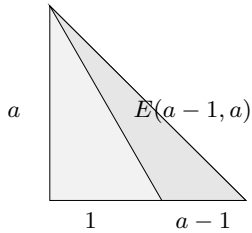

\begin{center}
\input twoellinball.pstex_t
\caption{$\P^2$ as union of two ellipsoids.}
\label{fig:twoell}
\end{center}\vspace{-,5cm}
\end{figure}

\paragraph{Step I: we produce a packing of $\P^2(a)\priv\tau E_0(a-1,a)$ for $\tau<1$.} Consider the sequence $(w_i)$ associated by Theorem \ref{mcduffthm}
to the ellipsoid $E(a-1,a)$. Since the closure of $E(a-1,a)$ embeds into $\P^2(a)$,  so does
$\sqcup B^4(w_i)$. Consider the symplectic form $\hat \om$ obtained by blowing-up these balls. By Lemma \ref{le:ratstabpack}, $(\P^2,\hat \om)$ has the packing stability. Thus, for numbers $a_i<\Lambda(\hat \om)$ with $\sum \frac{a_i^2}{2}<\vol(\hat \P^2_p,\hat \om)$, we have $\sqcup B(a_i)\omhra (\hat \P^2,\hat \om)$.
It is clear (either by Lemma \ref{le:directed} or by direct analysis in this case), that these balls also embed into the complement of the exceptional divisors in $\hat \P^2_p$. Blowing down the exceptional divisors, we then get a packing of $\P^2(a)$ by $\sqcup B(w_i)\sqcup B(a_i)$, hence by Theorem \ref{mcduffthm} a packing of $\P^2(a)$ by $E(a-1,a)\sqcup  B(a_i)$.

Fix now $\tau<1$, consider the induced packing of $\P^2(a)$ by $\tau E(a-1,a)\sqcup  B(a_i)$, and call $L_1,L_2$ the symplectic lines in $\P^2$ which correspond to the lines $\{w=0\}$ and $\{\pi(|z|^2+|w|^2)=a\}$ on the toric model. Since $\tau a$ and $\tau(a-1)$ are less than $a$, Lemma \ref{le:directed} ensures that there exists a symplectic embedding $\phi$ of $\tau E(a-1,a) \sqcup  B(a_i)$ such that $L_2$ intersects $\tau E(a-1,a)$ along its big axis (of size $\tau a$) and avoids all the balls, while $L_1$ intersects $\tau E(a-1,a)$ along its small axis and avoids the balls. Arguing as in \cite{moi5}, Corollary $2.2$, we can isotope $\phi(\tau E(a-1,a))$ to the standard embedding $\phi_0$, keeping $L_1,L_2$ fixed. As a result, we therefore get a packing of $\psi:\sqcup B(a_i)\omhra X:=\P^2(a)\priv(L_1\cup L_2\cup \phi_0(\tau E(a-1,a)))$.

\paragraph{Step II: we get a packing of $E(1,a)$.}
Consider now the contracting Liouville vector field $X_0$ on $\P^2\priv L_2$ which corresponds to the radial vector field $-\sum r_i\frac{\partial}{\partial r_i}=-\sum R_i\frac{\partial}{\partial R_i}$ on $B^4(a)=\P^2(a)\priv L_2$. A straightforward computation shows that the flow $\Phi^{f(\tau)}_{X_0}\circ \psi$ provides embeddings of symplectic balls of size $[1-f(\tau)]a_i$ into $E(1,a)$, for some $f(\tau) $ which tends to $0$ as $\tau \to 1$. We therefore get packings of $E(1,a)$ by balls of size arbitrarily close to $a_i$.

As ball packings are unique up to isotopy, see again \cite{mcop, mcduff4}, a limiting argument implies that we can actually get a packing of the whole open balls. \cqfd

\begin{rk} The previous proof can be roughly described as follows: an ellipsoid is the complement of an ellipsoid in $\P^2$, the latter being obtained from some specific ball packing in $\P^2$.  Thus packing the first ellipsoid amounts to packing $\P^2$ with more balls. This argument is completely  similar to McDuff's proof of  Hofer's conjecture \cite{mcduff5}.  The difference between the two approaches lies in the justification of this "thus" in the second sentence of the remark. McDuff refers to her inner/outer approximation technique in \cite{mcduff4}, while we base our proof on lemma \ref{le:directed}.
\end{rk}
\begin{lemma}
The function $\Lambda(a):=\Lambda(E(1,a))$ is locally bounded.
\end{lemma}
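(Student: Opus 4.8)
The goal is to show that $\Lambda(a) := \Lambda(E(1,a))$ is locally bounded on $\R^*$, that is, bounded on every compact subset of $(0,\infty)$ (equivalently, by the symmetry $E(1,a)\cong E(1/a,1)\cong \tfrac{1}{a}E(1,1/a)$, it suffices to consider $a$ in a compact subset of $[1,\infty)$). The strategy is to retrace the proof of the previous lemma, but now keeping track of all the constants as $a$ varies, and to invoke the continuity/stability statements (Lemmas \ref{le:stabw} and \ref{le:ratstabpack}) to get estimates uniform on a compact interval.

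First I would fix a compact interval $I = [1, A] \subset [1,\infty)$ and approximate: by Lemma \ref{le:stabw}, the McDuff weight vector $\bw(a)$ for $E(a-1,a)$ (equivalently for the ratio $a/(a-1)$) varies in a controlled way, so on $I$ one can cover $I$ by finitely many small intervals on each of which $p(\cdot)$ is constant and the weights $w_i(\cdot)$ move by less than any prescribed $\eps$. It then suffices to bound $\Lambda(E(1,a))$ uniformly for $a$ in one such small interval. On such an interval, the blow-up $(\hat\P^2_p, \hat\om)$ obtained by blowing up the balls $B(w_i(a))$ inside $\P^2(a)$ has volume bounded below by a positive constant depending only on $I$ (the total volume of $\P^2(a)$ is $a^2/2 \ge 1/2$, and the $w_i^2$ sum is controlled), so Lemma \ref{le:ratstabpack} gives a uniform lower bound on $\Lambda(\hat\om)$, call it $c_I > 0$. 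This is the packing stability input for $(\hat\P^2_p,\hat\om)$.

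Next I would run Steps I and II of the previous proof with this uniform $c_I$ in hand. Given any collection $a_i < c_I$ with $\tfrac12\sum a_i^2 \le \vol(\hat\P^2_p,\hat\om) = \vol(E(1,a))$, the argument produces, for each $\tau < 1$, a packing $\sqcup B([1-f(\tau)]a_i) \omhra E(1,a)$, where $f(\tau)\to 0$ as $\tau\to 1$; a limiting argument (using uniqueness of ball packings) then gives the packing by the open balls of size $a_i$ themselves, hence the volume obstruction is the only obstruction for balls of size $< c_I$. This shows $\Lambda(E(1,a)) \le c_I$ — wait, more precisely it shows $E(1,a)$ satisfies $\cp(c_I)$, hence $\Lambda(E(1,a)) \le c_I$ — for all $a$ in the small interval, and therefore $\Lambda$ is bounded by $\max$ over the finitely many small intervals covering $I$.

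**Main obstacle.** The delicate point is not the packing stability mechanism itself — that is just the previous proof re-run verbatim — but making every constant genuinely \emph{uniform} in $a$. Two things need care. First, the function $f(\tau)$ from Step II in principle depends on $a$ (through the geometry of $E(a-1,a)\subset\P^2(a)$ and the Liouville flow); one must check that for $\tau$ close to $1$ it can be bounded uniformly over the compact interval, which should follow from the explicit nature of the radial Liouville vector field but requires spelling out. Second, and more seriously, one must ensure that the embedding data behave continuously in $a$: that the neighborhoods of $E(a-1,a)$ of size $\delta$ used in the blow-up construction, and the isotopy to the standard model $\phi_0$ (via \cite{moi5}, Corollary 2.2), can all be performed with uniform constants as $a$ ranges over a small interval. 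This is exactly what Lemma \ref{le:stabw} is designed to supply on the level of the weights; the remaining continuity of the geometric constructions is expected to be routine given that they are all explicit, but it is where the real work of the proof lies.
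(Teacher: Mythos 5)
There is a genuine gap at the very first step of your argument. You claim that, by Lemma \ref{le:stabw}, a compact interval can be covered by finitely many small subintervals on each of which $p(\cdot)$ is constant and the weights $w_i(\cdot)$ move by less than $\eps$. This is exactly what Lemma \ref{le:stabw} does \emph{not} say: it asserts that for $a'$ close to $a$ the number of weights \emph{jumps up} ($p'>p$), with the first $p$ weights close to $\bw(a)$ and the remaining $p'-p$ weights merely small. The function $p$ (the length of a continued fraction expansion) is wildly discontinuous on $\Q$ and unbounded on every interval. This matters because the lower bound behind Lemma \ref{le:ratstabpack} comes from Lemma \ref{le:dom} and has the form $\frac{1-\kappa}{3+\sqrt{p}}$: it degrades to $0$ as the number of blown-up balls grows. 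So if, as you propose, you blow up all $p(a')$ balls $B(w_i(a'))$, the constant $c_I$ you extract is \emph{not} uniform in $a'$, and the argument collapses precisely where the real difficulty of this lemma lies.

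The paper's proof overcomes this with an idea absent from your proposal: for $a'$ near $a$ one blows up only the first $p=p(a)$ balls $B(w_1'),\dots,B(w_p')$, obtaining a form $\hat\om'$ on the \emph{fixed} $p$-fold blow-up whose cohomology class is close to the one associated to $a$, hence with $\Lambda(\hat\om')>d_{[\hat\om]}/2$ uniformly for $a'$ close to $a$. The leftover weights $w_i'$, $i>p$, which are smaller than $d_{[\hat\om]}/2$ once $a'$ is close enough to $a$, are then treated as additional balls to be \emph{packed} alongside the $B(a_i)$, using the packing stability of $(\hat\P^2_p,\hat\om')$ itself; Theorem \ref{mcduffthm} then reassembles the full weight sequence $\sqcup B(w_i')$ into the ellipsoid $E(a'-1,a')$, and one concludes as in the previous proof. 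Your secondary worries (uniformity of $f(\tau)$ in $\tau\to 1$, continuity of the isotopies) are legitimate but minor; the missing splitting of the weight vector into a blown-up part and a packed part is the essential point.
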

\noindent{\it Proof: }  We need to rework slightly the previous proof. Since we can approximate any ellipsoid by rational ellipsoids from the inside, we need only prove the statement for $a\in \Q$. So let us fix $a,a'\in \Q$, and consider the sequences $(w_1,\dots,w_p)$, $(w'_1,\dots,w_{p'}')$ associated to the ellipsoids $E(a-1,a)$ and $E(a'-1,a')$ by Theorem \ref{mcduffthm}. By Lemma \ref{le:stabw}, if we fix $a$ and consider $a'$ sufficiently close to $a$, then $p'>p$ and we can ensure that the $w_i'$ are arbitrarily close to the $w_i$ for $i\leq p$ and to $0$ for $i>p$.

Let $\hat \om'$ be the symplectic form obtained by blowing-up a packing $B(w_1')\sqcup\dots\sqcup B(w_p')$ of $\P^2$ (here we indeed mean $w_p'$ and not $w_{p'}'$). Lemma \ref{le:ratstabpack} ensures that there exists $\Lambda_{\hat \om'}>0$ such that  for any collection of numbers $a_i$ which abide by the volume constraint and $a_i<\Lambda(\hat \om')$ we have
$$
\sqcup B(a_i)\hra (\hat \P^2_p,\hat \om').
$$
Moreover, $\Lambda(\hat \om')>\frac{d_{[\hat \om]}}{2}$ provided that $(w_1',\dots,w_p')$ are close enough to $(w_1,\dots,w_p)$ - this is the case provided $a'$ is close enough to $a$, which we assume henceforth. We recall from Lemma \ref{le:dom} that the quantity $\frac{d_{[\hat \om]}}{2}$ is bounded away from $0$.

Assuming $a'$ even closer to $a$, we can also ensure that $w_i'<\frac{d_{[\hat \om]}}2$ for all $i>p$. Thus, $\P^2(1)$ has a packing by
$\sqcup B(w_i')\sqcup B(a_i)$,
for any collection of $(a_i)$ such that the volume constraint is satisfied and $a_i<\frac{d_{[\hat \om]}}{2}$. Arguing as in the previous proof, we get a packing of $E(1,a')$ ($\simeq \P^2(1)\priv E(a'-1,a')$) by the balls of size $a_i$, as long as the $a_i$ are less than $\frac{d_{[\hat \om]}}{2}$ together with the volume constraint. Hence $\Lambda(a')$ is bounded from below for $a'$ close to $a$ and the proof is complete. \cqfd

\section{Packing stability of pseudo-balls  $T(a,b,\alpha,\beta)$}\label{packpseudo}

We recall the definition of a pseudo-ball as the intersection of two ellipsoids.

\begin{definition}\label{def:ball-like}
A pseudo-ball is a domain $T(a,b,\alpha,\beta)\subset \C^2$ with $a>\alpha,b>\beta$, $a,b< \alpha+\beta$, defined by
$$
\begin{array}{l}
T(a,b,\alpha,\beta):=\{(z,w)\in \C^2\; |\; (\pi |z|^2, \pi |w|^2)\in Q(a,b,\alpha,\beta)\},\text{ where}\\
Q(a,b,\alpha,\beta):=\conv\langle (0,0), (0,a), (b,0), (\alpha,\beta) \rangle\subset \R^2.
\end{array}
$$
\end{definition}
Our proof of the packing stability for ellipsoids used the fact that an ellipsoid is always obtained by excising an ellipsoid from $\P^2$. We notice that a pseudo-ball is obtained by excising two ellipsoids (see Figure \ref{fig:ball-like}).

\begin{figure}[h!]
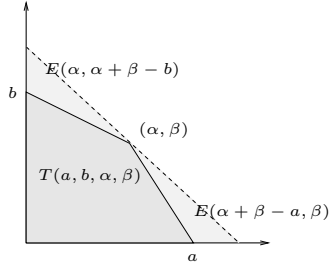

\begin{center}
\input pseudo-ball.pstex_t
\caption{$\P^2$ as union of two ellipsoids and the pseudo-ball.}
\label{fig:ball-like}
\end{center}
\end{figure}

To be precise we have the following.

\begin{lemma}
$T(a,b,\alpha,\beta)=\P^2(\alpha+\beta)\priv( E\cup E')$, where $E,E'$ are the toric ellipsoids with equations
$$
\begin{array}{l}
\pi(E'):=\conv\langle (0,b), (0,\alpha+\beta),(\alpha,\beta)\rangle,\\
\pi(E):=\conv \langle (a,0),(\alpha+\beta,0),(\alpha,\beta) \rangle.
\end{array}
$$
\end{lemma}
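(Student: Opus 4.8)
The plan is to verify this by a direct computation with moment polytopes, using the standard fact that a toric domain in $\C^2$ is determined by its image in $\R^2$ under the moment map $(z,w)\mapsto(\pi|z|^2,\pi|w|^2)$, and that $\P^2(\alpha+\beta)$ has moment polytope the triangle $\Delta:=\conv\langle(0,0),(\alpha+\beta,0),(0,\alpha+\beta)\rangle$. Under this identification, a disjoint union of toric subdomains of $\P^2(\alpha+\beta)$ corresponds (away from the toric divisors, i.e.\ up to a subset of measure zero) to a disjoint union of the corresponding subregions of $\Delta$. So it suffices to check the set-theoretic identity $Q(a,b,\alpha,\beta)=\Delta\priv\big(\pi(E)\cup\pi(E')\big)$ at the level of polytopes, and then observe that $E$ and $E'$ as defined are genuinely toric ellipsoids, disjointly embedded, whose union with $T(a,b,\alpha,\beta)$ exhausts $\P^2(\alpha+\beta)$ up to the boundary strata.

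First I would record the three polytopes explicitly. We have $\Delta=\conv\langle(0,0),(\alpha+\beta,0),(0,\alpha+\beta)\rangle$, and the claimed excised pieces are $\pi(E)=\conv\langle(a,0),(\alpha+\beta,0),(\alpha,\beta)\rangle$ and $\pi(E')=\conv\langle(0,b),(0,\alpha+\beta),(\alpha,\beta)\rangle$. The hypotheses $a>\alpha$, $b>\beta$, and $a,b<\alpha+\beta$ guarantee that $(a,0)$ and $(\alpha+\beta,0)$ lie on the bottom edge of $\Delta$ with $\alpha<a<\alpha+\beta$, that $(0,b),(0,\alpha+\beta)$ lie on the left edge with $\beta<b<\alpha+\beta$, and that $(\alpha,\beta)$ is an interior point of $\Delta$ (since $\alpha,\beta>0$ and $\alpha+\beta<\alpha+\beta$ fails strictly only on the hypotenuse — here $(\alpha,\beta)$ is strictly inside because $a>\alpha$ forces $\alpha<\alpha+\beta$ etc., so $\alpha+\beta$-coordinate sum is $\alpha+\beta$... wait, $(\alpha,\beta)$ lies \emph{on} the hypotenuse). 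I would note that in fact $(\alpha,\beta)$ sits on the hypotenuse $\{x+y=\alpha+\beta\}$ of $\Delta$, so $\pi(E)$ is the triangle cut off by the segment from $(a,0)$ to $(\alpha,\beta)$ and $\pi(E')$ the triangle cut off by the segment from $(0,b)$ to $(\alpha,\beta)$. Removing both from $\Delta$ leaves exactly the quadrilateral with vertices $(0,0),(b,0),(\alpha,\beta),(0,a)$, which is precisely $Q(a,b,\alpha,\beta)$; the non-convexity issue cannot arise because $a<\alpha+\beta$ and $b<\alpha+\beta$ keep $(b,0)$ and $(0,a)$ on the correct side of the two cutting segments. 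This is the routine polytope bookkeeping, and I would present it with a reference to Figure~\ref{fig:ball-like} rather than belabour the inequalities.

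The remaining point is to upgrade the polytope identity to a statement about open symplectic subdomains of $\P^2(\alpha+\beta)$. Here I would invoke the standard toric picture: each face of $\pi(E)$ (resp.\ $\pi(E')$) either lies on $\partial\Delta$, in which case it corresponds to a piece of a toric divisor of $\P^2(\alpha+\beta)$, or is the "free" face from $(a,0)$ to $(\alpha,\beta)$ (resp.\ $(0,b)$ to $(\alpha,\beta)$), which becomes the boundary hypersurface across which $E$ (resp.\ $E'$) meets $T(a,b,\alpha,\beta)$; a short check of the edge normals confirms that $E$ and $E'$ are indeed ellipsoids in the sense of Definition~\ref{defnpseudo} (their polytopes are, after an integral affine change of coordinates, standard ellipsoid triangles). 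Since the two closed triangles $\pi(E)$ and $\pi(E')$ meet only at the single point $(\alpha,\beta)$, the open ellipsoids $E$, $E'$ are disjoint, and $\P^2(\alpha+\beta)=T(a,b,\alpha,\beta)\sqcup E\sqcup E'$ up to the union of boundary hypersurfaces, which has measure zero. The main (and only real) obstacle is simply being careful that the excised quadrilateral is honestly $Q(a,b,\alpha,\beta)$ and not a larger or non-convex region — i.e.\ checking that the two cutting segments from $(\alpha,\beta)$ do not overshoot — and this is exactly what the hypotheses $a,b<\alpha+\beta$ and $a>\alpha$, $b>\beta$ are there to ensure.
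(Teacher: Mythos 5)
Your approach --- reduce everything to the moment polytope of $\P^2(\alpha+\beta)$ and check the identity $Q=\Delta\priv(\pi(E)\cup\pi(E'))$ combinatorially --- is exactly the intended argument; the paper offers no proof beyond the figure, so this is the right thing to write down. However, your final bookkeeping step contradicts your own (correct) setup. You correctly observe that $\pi(E)=\conv\langle (a,0),(\alpha+\beta,0),(\alpha,\beta)\rangle$ removes the portion of the bottom edge of $\Delta$ beyond $(a,0)$, and that $\pi(E')$ removes the portion of the left edge beyond $(0,b)$. Consequently the remaining quadrilateral is
$$
\Delta\priv\big(\pi(E)\cup\pi(E')\big)=\conv\langle (0,0),(a,0),(0,b),(\alpha,\beta)\rangle = Q(b,a,\alpha,\beta),
$$
with the vertex $(a,0)$ on the first axis and $(0,b)$ on the second --- not $\conv\langle (0,0),(b,0),(0,a),(\alpha,\beta)\rangle=Q(a,b,\alpha,\beta)$ as you assert in the sentence ``Removing both from $\Delta$ leaves exactly the quadrilateral with vertices $(0,0),(b,0),(\alpha,\beta),(0,a)$.'' By Definition \ref{def:ball-like}, $Q(a,b,\alpha,\beta)$ has its vertex of abscissa $b$ on the first axis and its vertex of height $a$ on the second, so the complement you actually obtain is $T(b,a,\alpha,\beta)$, which equals the image of $T(a,b,\beta,\alpha)$ under the swap $(z,w)\mapsto(w,z)$; it coincides with $T(a,b,\alpha,\beta)$ only when $\alpha=\beta$ (or up to that coordinate swap).

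To be fair, the discrepancy is almost certainly a labelling typo in the paper rather than a substantive issue: the same $\alpha\leftrightarrow\beta$ reflection appears when one computes the ellipsoid types from the stated polytopes ($E\cong E(\alpha+\beta-a,\beta)$ and $E'\cong E(\alpha+\beta-b,\alpha)$, whereas Proposition \ref{prop:Tpackstab} lists $E(\alpha+\beta-a,\alpha)$ and $E(\alpha+\beta-b,\beta)$), and nothing downstream is sensitive to composing with the symplectomorphism $(z,w)\mapsto(w,z)$. But a proof should either carry out the excision honestly and conclude $T(b,a,\alpha,\beta)\cong T(a,b,\beta,\alpha)$, noting the harmless relabelling, or adjust the vertices of $\pi(E),\pi(E')$ so that the complement is literally $Q(a,b,\alpha,\beta)$. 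As written, your computation silently swaps $a$ and $b$ between one sentence and the next, which is the one step a referee would flag. The rest --- the identification of $(\alpha,\beta)$ as a point of the hypotenuse, the role of the inequalities $a>\alpha$, $b>\beta$, $a,b<\alpha+\beta$, the verification via edge normals that $E,E'$ are genuine toric ellipsoids, and the measure-zero caveat along the toric divisors --- is fine.
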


Moreover, arguing again as in \cite{moi5}, we see that these domains are the complement in $\P^2$ of {\it any} such ellipsoids which are in a good position with the coordinate axis.

\begin{prop}\label{prop:Tpackstab}
Let $L_1,L_2,L_3$ be three lines in $\P^2(\alpha+\beta)$. Let $E$ be an embedding of the ellipsoid $E(\alpha+\beta-a,\alpha)$ into $\P^2$ whose axis are discs $D_1,D_3$ in $L_1,L_3$ of area $\alpha+\beta-a, \alpha$ respectively. Let $E'$ be an embedding of the ellipsoid $E(\alpha+\beta-b,\beta)$ whose axis are discs $D_2',D'_3$ in $L_2,L_3$ of area $\alpha+\beta-b,\beta$ respectively (notice that the discs $D_3,D_3'$ must cover $L_3$). Then, $\P^2(\alpha+\beta)\priv (E\cup E')$ is  symplectomorphic to $T(a,b,\alpha,\beta)$.
\end{prop}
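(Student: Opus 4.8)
The plan is to reduce the statement to the toric model lemma above, by exhibiting an ambient symplectomorphism of $\P^2(\alpha+\beta)$ carrying the pair $(E,E')$ onto the pair $(E_0,E_0')$ of standard toric ellipsoids appearing there. Once such a $\Psi\in\mathrm{Symp}(\P^2(\alpha+\beta))$ has been produced, it restricts to a symplectomorphism
$$
\P^2(\alpha+\beta)\priv(E\cup E')\;\xrightarrow{\sim}\;\P^2(\alpha+\beta)\priv(E_0\cup E_0')=T(a,b,\alpha,\beta),
$$
the last equality being the content of the lemma, and the proof is complete (the comparison of closures versus interiors of the three pieces is the routine bookkeeping already implicit there).

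First I would normalise the lines. The reducible symplectic curve $C=L_1\cup L_2\cup L_3$ consists of three symplectic spheres in the class $L$, pairwise transverse and with no triple point; by the connectedness of the space of such configurations of symplectic lines in $\P^2$ (standard: one straightens the lines one at a time, keeping the previous ones fixed) together with the isotopy extension theorem, there is an ambient symplectic isotopy taking $C$ onto the standard toric configuration — the two coordinate axes and the line at infinity of the toric model. Applying it, we may assume that $L_1,L_2,L_3$ are the toric lines, that $E$ meets them along the prescribed axis discs $D_1\subset L_1$ and $D_3\subset L_3$ of areas $\alpha+\beta-a$ and $\alpha$, that $E'$ meets them along $D_2'\subset L_2$ and $D_3'\subset L_3$ of areas $\alpha+\beta-b$ and $\beta$, and that $D_3\cup D_3'=L_3$. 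If the $L_i$ are already the toric lines this step is empty.

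The core of the argument is then the isotopy statement already invoked in Section \ref{section33}, namely \cite{moi5}, Corollary 2.2, applied now to a packing by two ellipsoids rather than one: a union of ellipsoids in \emph{good position} with respect to a configuration of toric lines — each ellipsoid meeting the lines exactly along prescribed subdiscs realising its two axes — is carried onto the corresponding standard toric union by an ambient isotopy fixing the lines. Here this applies to $(E,E')$ directed by $C=L_1\cup L_2\cup L_3$: $E$ has its axes along $D_1\subset L_1$ and $D_3\subset L_3$, $E'$ along $D_2'\subset L_2$ and $D_3'\subset L_3$, and since $D_3\cup D_3'=L_3$ the two ellipsoids together standardise all of $L_3$ (one may either treat the pair at once, or standardise $E$ first and then $E'$ inside the complement of $\overline{E_0}$, using the uniqueness up to isotopy of ellipsoid packings). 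Composing the resulting isotopy with that of the previous paragraph yields $\Psi$.

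The step I expect to be the real obstacle is precisely this last one, and exactly because the area requirement along $L_3$ is tight: $D_3$ and $D_3'$ together fill all of $L_3$, whose area $\alpha+\beta$ equals the sum of their areas, so the relevant inequality is an equality and Lemma \ref{le:directed} — whose hypotheses on component areas are strict — cannot be applied off the shelf. One must instead use the rigidity of \cite{moi5} in this critical configuration, or, equivalently, run a limiting argument in the spirit of Step II of the proof in Section \ref{section33}: shrink the ellipsoids slightly (equivalently, decrease $\alpha$ and $\beta$) so that the inequalities become strict and Lemma \ref{le:directed} (or its proof) applies with room to spare, standardise, and then recover the full, boundary-touching configuration by flowing along a contracting Liouville vector field and passing to the limit, invoking the uniqueness up to isotopy of ball and ellipsoid packings (McDuff, Opshtein; \cite{mcduff4,mcop}). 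The remaining points — disjointness of $E$ and $E'$, and the identification of the limiting complement with the interior $T(a,b,\alpha,\beta)$ — are routine.
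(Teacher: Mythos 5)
Your proposal matches the paper's intended argument: the paper gives no detailed proof, simply asserting that the claim follows ``arguing again as in \cite{moi5}'' (the same standardisation-of-ellipsoids-relative-to-lines result, Corollary 2.2, invoked in Section \ref{section33}), which is exactly the reduction to the toric model that you carry out. Your extra care about the tight area condition $D_3\cup D_3'=L_3$ and the limiting argument is a reasonable elaboration of what the paper leaves implicit.
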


With this in hand, the proof of Theorem \ref{thm:irrstabpseudo} is completely analogous to the packing stability of the ellipsoids, and so we do not repeat it here.


\section{Packing stability of irrational closed symplectic $4$-manifolds}\label{mainthm}

The object of this section is the proof of Theorem \ref{thm:irrstabclosed}. We begin with the general decomposition of symplectic $4$-manifolds, then derive packing stability as a consequence of this and our results for open manifolds.


\subsection{Decomposition of symplectic $4$-manifolds with irrational symplectic class}\label{mainone}
We have already referred to the fact that a rational symplectic manifold is always fully packed by a single ellipsoid. In particular, strong packing stability for ellipsoids implies strong packing stability for all rational symplectic manifolds. Since Conjecture \ref{conj1} remains open packing stability for ellipsoids does not necessarily imply the same for irrational manifolds, but we now recall some results obtained in \cite{moi6} which allow us to split a symplectic manifold with irrational symplectic class into finitely many standard pieces, namely ellipsoids and pseudo-balls. As before, the packing stability property for an irrational symplectic manifold will follow from
the packing properties of these standard pieces.

\begin{definition} A singular polarization of a symplectic $4$-manifold $(M,\om)$ is a weighted multi-curve ${\bf \Sigma}=(\Sigma_i,\alpha_i)$, where:
\begin{itemize}
\item[\sbull] $\Om:=[\om]=\sum \alpha_i \pd([\Sigma_i])$,
\item[\sbull] Each $\Sigma_i$ is an embedded symplectic curve,
\item[\sbull] $\Sigma_i$ intersects $\Sigma_j$ transversally and positively,
\item[\sbull] $\Sigma_i\cap \Sigma_j\cap \Sigma_k=\emptyset$.
\end{itemize}
\end{definition}
\begin{thm*}[\cite{moi6,moi7}]
Any closed symplectic $4$-manifold $M$ has a singular polarization $({\bf \Sigma,\alpha})$. The complement of $\cup \Sigma_i$ in $M$ is endowed with a tame Liouville form $\lambda$ with residues $\alpha_i$ at $\Sigma_i$.
\end{thm*}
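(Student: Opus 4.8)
The plan is to split the statement into two essentially independent tasks: (i) produce the embedded symplectic curves $\Sigma_i$, in general position and with positive mutual intersections, whose weighted Poincar\'e duals add up to $\Om=[\om]$; and (ii) once the $\Sigma_i$ are fixed, manufacture the tame Liouville form on $M\priv\cup\Sigma_i$ by patching explicit local models, checking that the only cohomological obstruction is killed by the prescribed residues. The engine behind (i) is Donaldson's theorem on the existence of symplectic hypersurfaces.

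For (i), I would first record that for every rational class $\rho$ in the symplectic cone of $M$ and every sufficiently divisible $m$, the class $m\rho$ is Poincar\'e dual to a connected embedded symplectic surface, and that such a surface stays symplectic for any form $C^0$-close to the reference one. Hence the convex cone generated by $\{\pd[\Sigma]\;:\;\Sigma\text{ an }\om\text{-symplectic surface}\}$ contains an open neighbourhood of $\Om$ in $H^2(M;\R)$, so there are finitely many $\om$-symplectic surfaces $\Sigma_i$ and real numbers $\alpha_i>0$ with
$$
\Om=\sum_i\alpha_i\,\pd[\Sigma_i].
$$
This is precisely the step at which rationality of $\Om$ ceases to matter: an irrational $\Om$ merely forces some of the $\alpha_i$ to be irrational, while the $\Sigma_i$ and everything below are unaffected. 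Next I would isotope the $\Sigma_i$ by small Hamiltonian perturbations so that they become pairwise transverse and no three of them meet at a point — a routine Sard--Smale genericity argument that does not change homology classes.

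The hard part is to also guarantee that every intersection $\Sigma_i\cap\Sigma_j$ is \emph{positive}. My approach would be to take the surfaces furnished by Donaldson's construction to be approximately holomorphic for one fixed $\om$-tame almost complex structure $J$; for such families the local intersection indices agree with those of genuine $J$-holomorphic curves, so positivity of intersections (Gromov, McDuff) applies once the transversality perturbations above have been carried out. Reconciling the approximately-holomorphic input with exact transversality, connectedness and positivity all at once is the delicate technical core, and is the reason the statement is genuinely deeper than its rational one-curve prototype, in which a single Donaldson hypersurface dual to $k\Om$ already does the job. I would expect this to require either a careful simultaneous choice of the Donaldson sequences or an auxiliary perturbation lemma.

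Finally, for (ii): fix disjoint tubular neighbourhoods of the smooth parts of the $\Sigma_i$ and product Darboux neighbourhoods of the double points. On a punctured fibre disc transverse to $\Sigma_i$, in polar coordinates $(r,\theta)$, put the singular primitive $\tfrac{1}{2\pi}(\pi r^2-c_i)\,d\theta$ of $\om$ with $c_i$ chosen so that its residue along $\Sigma_i$ equals $\alpha_i$ (near a double point $\Sigma_i\cap\Sigma_j$, the corresponding product of two such models in $\C^2$); these models are tame and they match consistently on overlaps, giving a primitive $\lambda_0$ of $\om$ on a punctured neighbourhood of $\cup\Sigma_i$. It remains to extend $\lambda_0$ to a global primitive on $M\priv\cup\Sigma_i$. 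Comparing an arbitrary extension with a primitive of $\om$ on the compact complement $W_0=M\priv(\text{small neighbourhoods of }\cup\Sigma_i)$ — which exists because $\Om$ restricts to $0$ on $M\priv\cup\Sigma_i$, each $\pd[\Sigma_i]$ restricting to $0$ already on $M\priv\Sigma_i$ — the discrepancy is a closed $1$-form on $\partial W_0$, and via the Thom isomorphism together with a Mayer--Vietoris computation its class turns out to be exact exactly because the fibre periods $\alpha_i$ are the coefficients in $\Om=\sum\alpha_i\pd[\Sigma_i]$. A Moser-type interpolation then glues the pieces, and a further isotopy making the Liouville vector field gradient-like yields the desired tame Liouville form with residues $\alpha_i$, completing the proof. \cqfd
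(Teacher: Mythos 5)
You should first be aware that the paper does not prove this statement at all: it is imported wholesale from \cite{moi6,moi7} (Opshtein's work on singular polarizations and tame Liouville forms) and used as a black box in Section 5. So there is no in-paper proof to compare against; what follows is an assessment of your proposal against the known strategy of those references, which your outline does broadly follow (Donaldson hypersurfaces dual to rational classes approximating $[\om]$ inside the open symplectic cone, followed by an explicit local-model construction of a primitive with prescribed residues on the complement).

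The genuine gap is in your treatment of positivity of intersections, and you have in fact located it yourself without closing it. The sequence ``perturb the $\Sigma_i$ by a generic small Hamiltonian isotopy to get transversality, then invoke positivity of intersections because the surfaces were approximately $J$-holomorphic'' does not work as stated. Positivity of intersections is a theorem about genuinely pseudo-holomorphic curves; for Donaldson submanifolds of degree $k$ the tangent planes are only within $O(k^{-1/2})$ of being $J$-complex, so an intersection point is forced to be positive only if the two branches meet at an angle bounded below by that defect. A generic Hamiltonian perturbation gives transversality with no lower bound on the intersection angle, so nearly-tangent intersections can perfectly well be negative. The correct tool is transversality \emph{with estimates} inside the approximately holomorphic category (Donaldson--Auroux): one chooses the sections defining the $\Sigma_i$ so that the pairwise intersections are uniformly transverse, with angle bounds independent of $k$, and then positivity (and the absence of triple points) follows for $k$ large. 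Your sentence ``I would expect this to require either a careful simultaneous choice of the Donaldson sequences or an auxiliary perturbation lemma'' is precisely the missing ingredient, not an optional refinement. A secondary, smaller gap: in part (ii) the residue bookkeeping is right in spirit, but you should verify that the primitive you build is \emph{tame} in the sense required later (the dual Liouville vector field must point outward along $\cup\Sigma_i$, including at the double points where the two local models are superposed, and must be forward complete); this is where the positivity of the $\alpha_i$ and the product structure of the Darboux model at the crossings actually get used, and it deserves an explicit check rather than a Moser interpolation asserted at the end.
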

The precise definition of the residues is not so important here, and can be found in \cite{moi7}. The vector field dual to $\lambda$ is a (contracting) Liouville vector field $X_\lambda$, which points outwards along $\cup \Sigma_i$. It is therefore forward complete. The basin of repulsion of a subset $X\subset\cup \Sigma_i$ is defined as
$$
\cb_\lambda(X):=\{p\in M\;|\; \exists t_0<0, \;\Phi_{X_\lambda}^{t}(p) \underset{t\to t_0}\lra X\}.
$$
As we see next proposition, the basins of repulsion  of well-chosen subsets fully pack $M$, and are standard.

\begin{prop} \label{prop:attrbasins} The basin of attraction of a disc $\D(a_i)\subset \Sigma_i$ is an embedded ellipsoid $E(a_i,\alpha_i)$. The basin of attraction of a cross $\D(a_i)\cup\D(a_j)\subset \Sigma_i\cup \Sigma_j$ (the two discs intersect at exactly one point) is an embedded pseudo-ball $T(a_i,a_j,\alpha_j,\alpha_i)$. Finally, if a family of discs and crosses cover the polarization up to area $0$, their basin of attraction cover $M$ up to volume $0$.
\end{prop}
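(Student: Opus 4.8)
The plan is to reduce the statement to the Liouville dynamics of $X_\lambda$ in explicit local models, and then to close the global part by a volume count. I would first recall from \cite{moi6,moi7} the normal form for $(\om,\lambda)$ near a point of $\Sigma_i$ disjoint from the other components: a neighbourhood of $\Sigma_i$ is a symplectic disc bundle over $\Sigma_i$ with fibres of area $\alpha_i$, the curve $\Sigma_i$ being the zero section, and $\lambda$ the standard Liouville form of residue $\alpha_i$. Over a disc $\D(a_i)\subset\Sigma_i$ disjoint from the other curves this gives Darboux coordinates $(\D\times\D,\om_0)$ with $\Sigma_i=\D\times\{0\}$ in which, writing $(\mu_1,\mu_2)=(\pi|z_1|^2,\pi|z_2|^2)$ so that $\D(a_i)=\{\mu_1<a_i,\ \mu_2=0\}$, $\lambda$ agrees with the radial primitive corrected by the singular term carrying the residue $\alpha_i$. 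Near a double point $p\in\Sigma_i\cap\Sigma_j$ the model is the analogous one on $(\D\times\D,\om_0)$ with $\Sigma_i$ and $\Sigma_j$ the two axes and two residue terms, for $\alpha_i$ and $\alpha_j$.

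In the first model $X_\lambda$ is an affine radial vector field in moment coordinates, whose flow scales $\mu_1$ and $\mu_2-\alpha_i$ by the same exponential factor. Since $\mu_2$ stays monotone along the flow and bounded by the fibre area $\alpha_i$, the trajectory through any point with $\mu_2<\alpha_i$ reaches $\Sigma_i=\{\mu_2=0\}$ in finite time, at the point of abscissa $\alpha_i\mu_1/(\alpha_i-\mu_2)$, and does not leave the model neighbourhood first. Hence $\cb_\lambda(\D(a_i))$, the set of points whose trajectory limits into $\D(a_i)$, equals $\{\alpha_i\mu_1/(\alpha_i-\mu_2)<a_i\}=\{\mu_1/a_i+\mu_2/\alpha_i<1\}$, the embedded toric ellipsoid $E(a_i,\alpha_i)$. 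Near a double point the flow decouples in the two moment coordinates, each scaling relative to its own residue, and a trajectory reaches whichever of $\Sigma_i$, $\Sigma_j$ its corresponding coordinate hits first — a comparison governed by a single linear inequality. Working out the two landing maps exhibits $\cb_\lambda(\D(a_i)\cup\D(a_j))$ as a quadrilateral toric domain, the union of two triangles sharing the segment from the origin to a common apex, which is the pseudo-ball attached to $\Sigma_i\cap\Sigma_j$ in the statement; the inequalities $a_i>\alpha_j$, $a_j>\alpha_i$ and $a_i,a_j<\alpha_i+\alpha_j$ are exactly what keeps this quadrilateral from degenerating.

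For the global assertion, suppose a family of discs and crosses covers $\bigcup_i\Sigma_i$ up to a set of area zero. The associated basins are pairwise disjoint, since a point in two of them would have a Liouville trajectory limiting into two disjoint pieces of the polarization, contradicting uniqueness of the limit. From the local pictures the basin of a disc $\D(a)\subset\Sigma_i$ has volume $\frac12 a\alpha_i$, and that of a cross $\D(a_i)\cup\D(a_j)$, being the union of two triangles of areas $\frac12 a_i\alpha_i$ and $\frac12 a_j\alpha_j$, has volume $\frac12(a_i\alpha_i+a_j\alpha_j)$. Grouping the pieces of the covering by the component on which they lie and using $\Om=\sum_i\alpha_i\pd([\Sigma_i])$,
$$
\sum(\text{volumes of the basins})=\frac12\sum_i\alpha_i\,\Om(\Sigma_i)=\frac12\sum_{i,j}\alpha_i\alpha_j\,(\Sigma_i\cdot\Sigma_j)=\frac12\,\Om^2=\vol(M).
$$
Being a disjoint family of open subsets of total volume $\vol(M)$, the basins cover $M$ up to volume zero.

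The step I expect to be the real obstacle is the first one: extracting from \cite{moi6,moi7} the precise local model of $(\om,\lambda)$ near $\Sigma_i$ and near the double points — in particular that the Liouville neighbourhood of $\Sigma_i$ has fibres of area $\alpha_i$, hence is large enough to contain all of $E(a_i,\alpha_i)$, and that Liouville trajectories do not escape it before meeting the polarization. Once these models are in hand the flow computations are routine, and the covering statement is just the volume bookkeeping above.
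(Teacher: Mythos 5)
Your local computations are exactly those of the paper: in moment coordinates the Liouville field is affine with the flow converging to $(0,\alpha_i)$ (resp.\ to $(\alpha_j,\alpha_i)$ at a double point), trajectories are straight segments, and the basin is the toric preimage of the triangle $\conv\langle(0,0),(a_i,0),(0,\alpha_i)\rangle$, resp.\ of the quadrilateral $\conv\langle(0,0),(a_i,0),(0,a_j),(\alpha_j,\alpha_i)\rangle$. Where you genuinely diverge is in how the two halves of the argument are closed. For the reduction to the local model, the paper does not argue that trajectories stay in a normal-form neighbourhood; it invokes the stronger principle from \cite{moi7} that basins of repulsion are determined by the weights of the polarization alone, so the computation may be performed once and for all in $\C^2$ with the model Liouville forms. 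Your route (normal form with fibres of area $\alpha_i$ plus a no-escape argument for backward trajectories) is workable but leans on essentially the same input from \cite{moi6,moi7}, as you correctly flag. For the covering statement the paper's argument is softer than yours: forward completeness of the contracting Liouville flow forces the complement of the basin of the \emph{whole} polarization to have zero volume, and basins of zero-area subsets have zero volume, so no volume needs to be computed at all. Your bookkeeping $\sum \vol(\cb_\lambda)=\tfrac12\sum_i\alpha_i\,\Om(\Sigma_i)=\tfrac12\,\Om^2[M]=\vol(M)$ is a correct and pleasant cross-check, but note that it requires the discs and crosses to be (essentially) pairwise disjoint — otherwise the basins overlap and the count no longer implies the covering — whereas the paper's argument needs no such hypothesis and no knowledge of the individual volumes. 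Both approaches are valid; the paper's is more robust, yours makes the volume identity explicit.
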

\noindent {\it Proof:} Since the flow $\Phi^t_{X_\lambda}$ is forward complete and contracts the symplectic form, it is obvious that the complement of the basin of repulsion of the whole polarization has zero volume. On the other hand, the basin of repulsion of a subset of the polarization with zero-area has zero volume. We conclude that if discs and crosses cover the polarization up to area $0$, their basins of repulsion cover $M$ up to volume $0$ (we refer to \cite{moi7} for more details).

It is moreover explained in this paper that these basins of repulsion can be computed in {\it any} polarization, of any manifold, provided that these polarizations have the same weights. In particular, the basin of repulsion of $\D(a_i)\subset \Sigma_i$ is symplectomorphic to the basin of repulsion of $\D(a_i)\times\{0\}\subset \C^2$, with Liouville form $\alpha_id\theta_2-r_1^2d\theta_1-r_2^2d\theta_2$. (Note that the Liouville form has residue $\alpha_i$ about the $z_1$ axis.) A computation done in \cite{moi6} (Proposition 3.5) shows that this basin of repulsion is the ellipsoid $\ce(a_i,\alpha_i)$. The basin of repulsion of a cross $\D(a_i)\times \D(a_j)\subset \Sigma_i\times\Sigma_j$ is symplectomorphic to the basin of repulsion of $\D(a_i)\times\{0\}\cup \{0\}\times \D(a_j)$, for the Liouville form $\alpha_id\theta_2+\alpha_jd\theta_1 -R_1d\theta_1-R_2d\theta_2$, where $R_i=r_i^2$. (The Liouville form now has residues $\alpha_i$ about the $z_1$ axis and $\alpha_j$ about the $z_2$ axis.) The dual vector field is
$$
X_\lambda= (\alpha_j-R_1)\frac\partial{\partial R_1}+(\alpha_i-R_2)\frac\partial{\partial R_2}.
$$
Therefore, the trajectories are
$$
\Phi^t_{X_\lambda}(R_1,\theta_1,R_2,\theta_2)=(\alpha_j+(R_1-\alpha_j)e^{-t},\theta_1,\alpha_i+(R_2-\alpha_i)e^{-t},\theta_2).
$$
Thus, this vector field commutes with the classical Hamiltonian toric action on $\C^2$, and
in toric coordinates, the trajectories are straight segments whose common endpoint as $t$ goes to $+\infty$ is the point $(\alpha_j,\alpha_i)$. Since the cross is the preimage by the toric projection of $[0,a_i]\times\{0\}\cup \{0\}\times[0,a_j]\subset \R^2$, its basin of repulsion is the preimage of the convex hull of $\{(0,0),(a_i,0),(0,a_j),(\alpha_j,\alpha_i)\}$, by the toric projection, and hence is indeed $T(a_i,a_j,\alpha_j,\alpha_i)$.\cqfd

\subsection{Proof of theorem \ref{thm:irrstabclosed}}\label{maintwo}
Let $(M,\om)$ a closed symplectic  manifold, on which we lose no generality by assuming its volume to be $1$.

Fix a singular polarization ${\bf \Sigma}=\{(\Sigma_i,\alpha_i),i=1\dots l\}$ such that $\ca_\om(\Sigma_i)\geq 10 \alpha_j$ $\forall i,j$, and $\Sigma_i\cap \Sigma_j\neq \emptyset$ $\forall (i,j)$. In fact, the construction proposed in \cite{moi6} provides singular polarizations that automatically satisfy these two constraints: they intersect a lot, the area of the curves are very large, and the $\alpha_i$ are very small. Now consider points $x_i\in \Sigma_i\cap \Sigma_{i+1}$ (with $\Sigma_{l+1}=\Sigma_1$), and  discs $D_i,D_{i,i-1},D_{i,i+1}\subset \Sigma_i$ with areas $\ca_i,\ca_{i,i-1},\ca_{i,i+1}$ respectively, such that:
\begin{enumerate}
\item The discs are disjoint, and their union has full area in $\Sigma_i$ $\forall i$,
\item $x_{i-1}\in D_{i,i-1}$, $x_i\in D_{i,i+1}$,
\item
$\ca_{i,i\pm 1}\in]\alpha_{i\pm 1},\alpha_i+\alpha_{i\pm 1}[$.
Thus
the domains
$T(\ca_{i,i+1},\ca_{i+1,i},\alpha_{i+1},\alpha_i)$ are pseudo-balls (see figure \ref{fig:decomp}).
\end{enumerate}
Condition $3$ is easily achieved by choosing the discs inductively  so that these conditions hold (choose first $D_1$, then $D_{1,2}$, $D_2$, $D_{2,3}$ \dots). Note that in order to satisfy condition $1$, the area $\ca_{1,2}$ is determined by $\ca_1$; the areas $\ca_{2,1}$ and $\ca_2$ determine $\ca_{2,3}$ and so on.

\begin{figure}[h!]
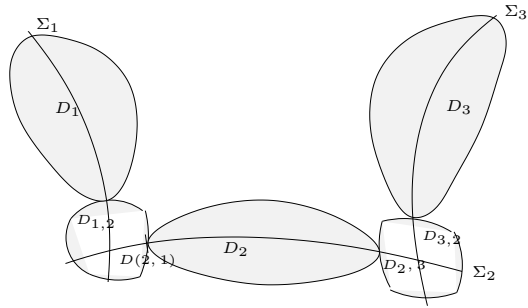

\begin{center}
\input decomp.pstex_t
\caption{Decomposition of a closed $4$-manifold into ellipsoids and pseudo-balls}
\label{fig:decomp}
\end{center}
\end{figure}

By Proposition \ref{prop:attrbasins} the
$\ce(\ca_i,\alpha_i)$ and the
 $T(\ca_{i,i+1},\ca_{i+1,i},\alpha_{i+1},\alpha_i)$ give a full packing of $M$.
Call $m$ its number of pieces.

Now, by Theorem \ref{thm:irrstabell} and Proposition \ref{prop:Tpackstab}, there exists a $\Lambda$ and $\eps_0$ such that all ellipsoids $E(\ca_i+\eps_i,\alpha_i)$, and all pseudo-balls  $T(\ca_{i,i+1}+\eps_{i,i+1},\ca_{i+1,i}+\eps_{i+1,i},\alpha_i,\alpha_{i+1})$ have the strong packing stability, with constant $\Lambda$, for all $\eps_i,\eps_{i,j}<\eps_0$. Moreover, there exists a constant $\delta$ and a sequence $\eps'_1 < \eps'_2 < \dots < \eps'_m=\eps_0$ such that we can achieve any volume within $\delta$ of the original volume of one of our ellipsoids by changing the $\eps_i$ parameter by less than $\eps'_1$; and if one parameter of any of our pseudo-balls is changed by less than $2\eps'_l$ then we can change the second parameter by less than $\eps'_{l+1}$ to achieve any volume within $\delta$ of the original.

We claim that $M$ satisfies the packing stability with constant $\Lambda'=\min(\Lambda, \sqrt{2\delta} )$. (Note that a ball of capacity $\sqrt{2\delta}$ has volume $\delta$.) Indeed, suppose we have a sequence of balls, each of capacity less that $\Lambda'$. If the total volume of the balls is less than $1$ we may include some additional ones. Then we can partition the balls into $m$ subsets such that the sum of the volumes of the balls in the $i$th block lies within $\delta$ of the volume of the $i$th piece of our decomposition of $M$.

Now we claim that we can perturb the discs $D_i,D_{i,j}$ to discs $D_i',D_{i,j}'$ which verify the four conditions listed above but are such that the volume of the associated piece of our decomposition is equal to the total volume of the corresponding subset of balls. We do this in order. First we perturb $\ca_1$ by less than $\eps_1$ to get an $\ca'_1$ such that $E(\ca'_1,\alpha_1)$ has the volume of the first subset of balls. The area $\ca'_{1,2}$ is then determined, but will be changed by less than $\eps_1$. We can then perturb $\ca_{2,1}$ by less than $\eps_2$ to get a new pseudo-ball $T(\ca'_{1,2}, \ca'_{2,1}, \alpha_1, \alpha_2)$ whose volume is equal to the volume of the second subset of balls. Carrying on in this way we get the volumes as claimed.

Finally, as the perturbed pieces still have stability constant less than $\Lambda$ they can be fully filled by the corresponding subset of balls, and as a consequence the balls fully pack $M$ as required. \hfill $\square$



{\footnotesize
\bibliographystyle{alpha}
\bibliography{bib3}
}
\end{document}